\renewcommand{\theequation}{\thesection.\arabic{equation}}
\newtheorem{df}{Definition}[section]
\newtheorem{lm}{Lemma}[section]
\newtheorem{thm}{Theorem} [section]
\newtheorem{cor}{Corollary}[section]
\newtheorem{rem}{Remark}[section]
\title{Well-posedness and  persistence properties  for two-component higher order Camassa-Holm systems with fractional inertia operator
\thanks{This work is supported by National Science Fund for Young Scholars of China (Grant No. 11301573), University Young Core  Teacher Foundation of Chongqing, Technology Research Foundation of Chongqing Educational Committee (Grant No. KJ1400503), Natural Science Foundation of Chongqing  (Grant No. cstc2014jcyjA00008),   the Talent Project of Chongqing Normal University(Grant No. 14CSBJ05). The third author is supported by National Science Foundation of China (Grant No.11371384) and Natural Science Foundation of Chongqing (Grant N0. cstc2015jcyjBX0007).}}
\author{Rong Chen and Shouming Zhou\footnote{E-mail:
zhoushouming76@163.com } \\
\small   College of Mathematics Science, Chongqing Normal University, Chongqing 401331, China.}
\begin{document}
\maketitle
\renewcommand{\theequation}{\arabic{section}.\arabic{equation}}
\catcode`@=11 \@addtoreset{equation}{section} \catcode`@=12
\textbf{Abstract.}
In this paper, we study the Cauchy problem for a two-component higher order Camassa-Holm systems with fractional inertia operator $A=(1-\partial_x^2)^r,r\geq1$, which was proposed by Escher and Lyons \cite{EL}.
By the transport equation theory and Littlewood-Paley decomposition,  we obtain that the local well-posedness of solutions for the system
in  nonhomogeneous Besov spaces $B^s_{p,q}\times B^{s-2r+1}_{p,q}$ with $1\leq p,q \leq +\infty$ and the Besov index   $s>\max\left\{2r +\frac{1}{p},2r+1-\frac{1}{p}\right\}$.
Moreover, we construct the local well-posedness in the critical Besov space $B^{2r+\frac{1}{2}}_{2,1}\times B^{\frac{3}{2}}_{2,1}$.  On the other hand,   the propagation behaviour of compactly supported solutions is examined, namely whether solutions which
are initially compactly supported will retain this property throughout their time of evolution.
Moreover, we also establish the
persistence properties  of the solutions  to the two-component Camassa-Holm
equation with $r=1$ in weighted $L_{ \phi}^p:=L^p(\mathbb{R},\phi^p(x)dx)$   spaces  for a large class of moderate weights. \\
{\bf Keywords}: higher order Camassa-Holm systems, well-posedness, Besov spaces, infinite propagation speed,  persistence properties.\\
{\bf Mathematics Subject Classification(2000)}: 35G25, 35L05, 35Q50, 35Q53, 37K10.

\section{Introduction }
 In this paper, we consider the following Cauchy problem
\begin{equation}\label{Eq.(1.1)}
\left\{
\begin{array}{llll}
m_{t}=\alpha   u_x-bu_xm-um_x-\kappa \rho \rho_x, \,\ m=Au,&t>0,x\in\mathbb{R} , \\
\rho_t=-u \rho_x-(b-1)u_x\rho ,\,\ \alpha_t=0 &t>0,x\in\mathbb{R},\\
u(x,0)=u_0(x),\,\ \rho(x,0)=\rho_0(x),&t=0,x\in\mathbb{R},
\end{array}
\right.
\end{equation}
where the inertia operator $A=(1-\partial_x^2)^r$ belongs to the class of fractional Sobolev norms $r\geq1$, and the constants $ b\in\mathbb{R}  ,\kappa\in\mathbb{R}$.

Obviously, if $\rho\equiv0,A=1-\partial_x^2$ and  $b\in\mathbb{R},\alpha_x=0$ the Eq.(\ref{Eq.(1.1)}) becomes a one-component family of equations which are parameterised by $b $:
\begin{equation}\label{Eq.(1.2)}
u_t-u_{xxt}+c_0u_x+(b+1)uu_x=bu_xu_{xx}+uu_{xxx}.
\end{equation}
 This
family of so-called $b$-equations possess a number of structural phenomena which are shared
by solutions of the family of equations (c.f.\cite{EY,ZhouMu,ZMW}).
 By using
Painlev\'{e} analysis, there are  only
two asymptotically integrable within this family:   the Camassa-Holm (CH) equation (Eq.(\ref{Eq.(1.2)}) with $b=2$, c.f. \cite{CH,CHH})  and the Degasperis-Procesi (DP) equation (Eq.(\ref{Eq.(1.2)}) with $b=3$, c.f. \cite{DP}). Integrable equations have widely been studied because they usually have very good properties including  infinitely many conservation laws, infinite higher-order symmetries, bi-Hamiltonian
structure, and Lax pair, which make them solved by the inverse scattering method. Conserved quantities are very feasible for proving the existence of global solution in time, while a bi-Hamiltonian
formulation helps in finding conserved quantities effectively. The advantage of the CH and DP equation in
comparison with the KdV equation lie  in the fact that they not only have peaked solitons but also model the
peculiar wave breaking phenomena (c.f. \cite{CHH,CE2}), and hence they represent the first examples of
integrable equations which possess both global solutions
and solutions which display wave-breaking in finite time, c.f. \cite{C,CE,CE2}.

In recent years, a number of integrable multi-component generalisations
of the CH equation has been studied extensively. One of them is the following family of two-component systems parameterised by $b$
\begin{equation}
\left\{
\begin{array}{llll}
m_{t}=\alpha   u_x-bu_xm-um_x-\kappa \rho \rho_x,\,\  m=u-u_{xx},\\
\rho_t=-u \rho_x-(b-1)u_x\rho ,\,\  \alpha,b\in\mathbb{R},
\end{array}
\right.
\end{equation}
Apparently,  the two-component Camassa-Holm system  \cite{CI}, and
 the two-component Degasperis-Procesi system \cite{P} are included in Eq. (1.3) as two special cases with $b=2 $ and $b=3$, respectively.  Recently, Escher et al. presented the hydrodynamical derivation
of the system (1.3), as a model for water waves with $\alpha$ a
constant incorporating an underlying vorticity of the flow \cite{EHKL}.  They also proved the local
well-posedness of (1.3) using a geometrical framework, studied the blow-up scenarios and global strong
solutions of (1.3) on the circle.

All of these hydrodynamical models have a geometrical interpretation in terms
of a geodesic flow on an appropriate infinite dimensional Lie group. The seminal
work of Arnold \cite{A} reformulated the Euler equation describing an ideal fluid, as a
geodesic flow on the group of volume preserving diffeomorphisms of the fluid domain.
Following this, Ebin and Marsden \cite{EM} reinterpreted this group of volume preserving
diffeomorphisms as an inverse limit of Hilbert manifolds.  This technique
has been used in \cite{CK,M} for the periodic CH equation. It was extended to (nonmetric) geodesic
flows such as the DP equation in \cite{EK1} and to right-invariant metrics induced by fractional
Sobolev norm (non-local inertia operators) in \cite{EK2}.   In 2009, Mclachlan and Zhang \cite{MZ} studied the
Cauchy problem for a modified CH equation derived as the Euler-Poincar\'{e} differential equation on the
Bott-Virasoro group with respect to the $H^k$ metric, i.e.,
\begin{equation}
m_t+2u_xm+um_x=0,\,\ m= (1-\partial_x^2)^ku,\,\ k\in\mathbb{N}.
\end{equation}
In \cite{CHK}, Coclite, Holden and Karlsen considered higher order Camassa-Holm equations (1.4) describing
exponential curves of the manifold of smooth orientation-preserving diffeomorphisms of the unit
circle in the plane. Recently, in \cite{EL}, Escher and Lyons shown that
the system in equation (1.1)  corresponds to a metric induced geodesic flow on the
infinite dimensional Lie group Diff$^\infty(\mathbb{S}^1) \circledS C^\infty(\mathbb{S}^1) \times \mathbb{R}$,  where
   Diff$ ^\infty(\mathbb{S}^1)  $ denotes the group of orientation preserving diffeomorphisms of
the circle,  $  C^\infty(\mathbb{S}^1)  $ denotes the space of smooth function on  $ \mathbb{S}^1 $ while $ \circledS   $ denotes
an appropriate semi-direct product between the pair.

However, the Cauchy problem of (1.1) on the line has not been studied yet. In this paper, using the
Littlewood-Paley theory, we established the local well-posedness of (1.1) in nonhomogeneous Besov spaces. Furthermore, We examine the propagation
behaviour of compactly supported solutions, namely whether solutions which
are initially compactly supported will retain this property throughout their
time of evolution.   In \cite{GHY},  the authors shown that the solution of Eq.(1.3)
has exponential decay if the initial data in $H^s\times H^{s-1}$ with $s>5/2$ and has exponential decay, in present paper, working with moderate weight functions that are commonly used in time-frequency analysis \cite{Br}, we generalize the persistence result on the solution to Eq. (1.3) in the weighted
$L^p=L^p(\mathbb{R},\phi^p(x)dx)$ spaces, and we also extend the Sobolev index to $s>3/2$. 

Our paper is organized as follows. In Section 2, we establish the local well-posedness of the Cauchy problem associated with (1.1) in
Besov spaces. In Section 3, we discuss the infinite propagation speed for (1.1).   In the last section, we establish
persistence properties and some unique continuation properties of the solutions to the Eq.(1.3)   in weighted  $L^p_{ \phi}:=L^p(\mathbb{R},\phi^p(x)dx)$  spaces. 

\section{Local well-posedness in Besov spaces}
In this section, we shall discuss the local well-posedness of the Cauchy problem
 Eq.(1.1)  in the nonhomogeneous Besov spaces. The Littlewood-Paley theory and the properties
of the Besov-Sobolev spaces can refer to \cite{B,D23} and references therein.

\subsection{Local well-posedness in Besov spaces $B_{p,q}^s$ }
In this section, we shall discuss the local well-posedness of the Cauchy problem (1.1).
First, we present the following definition.
 \begin{df}
For $T>0,s\in\mathbb{R}$ and $1\leq p\leq +\infty$ and $s\neq 2+\frac{1}{p}$, we define
$$E_{p,q}^s(T)\doteq \mathcal{C}([0,T];B_{p,q}^s)\cap \mathcal{C}^1([0,T];B_{p,q}^{s-1})  \quad \text{ if } r<+\infty,$$
$$E_{p,\infty}^s(T)\doteq L^\infty([0,T];B_{p,\infty}^s)\cap Lip([0,T];B_{p,\infty}^{s-1}),$$
and $E_{p,q}^s\doteq \cap_{T>0}E_{p,q}^s(T)$.
\end{df}
The result of the local well-posedness in the Besov space may now be stated.

\begin{thm}\label{result1}
Suppose that $1\leq p,q \leq +\infty$ and   $s>\max\left\{2r +\frac{1}{p}, 2r+1-\frac{1}{p}\right\}$  with $r\geq1$, Let  the function $\alpha(x)\equiv c\in \mathbb{R}$ or $\alpha(\cdot)\in B_{p,q}^{s-2r }$, and the initial data $(u_0,\rho_0)\in B_{p,q}^{s}\times B_{p,q}^{s-2r+1}$.  Then there exists a time $T>0$ such that the Cauchy problem (1.1) has a unique solution $(u,\rho)\in E_{p,q}^{s}(T)\times E_{p,q}^{s-2r+1}(T)$, and map $(u_0,\rho_0)\mapsto (u,\rho)$ is continuous from a neighborhood of $(u_0,\rho_0)$ in $B_{p,q}^{s}\times B_{p,q}^{s-2r+1}$ into
$$\mathcal{C}([0,T];B_{p,q}^{s'}) \cap \mathcal{C}^1([0,T];B_{p,q}^{s'-1})\times \mathcal{C}([0,T];B_{p,q}^{s'-2r+1}) \cap \mathcal{C}^1([0,T];B_{p,q}^{s'-2r})$$
for every $s'<s$ when $r=+\infty$ and $s'=s$ whereas $r<+\infty$.
\end{thm}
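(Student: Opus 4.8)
The plan is to recast the system (1.1) as a coupled system of transport equations for the pair $(u,\rho)$ and then to run the classical Danchin iteration scheme in the Besov spaces $B^s_{p,q}\times B^{s-2r+1}_{p,q}$, relying throughout on the a priori estimates for the linear transport equation $\partial_t f + v\partial_x f = g$ in Besov spaces. Since $\alpha_t=0$, the quantity $\alpha$ is a fixed parameter (a constant, or a frozen element of $B^{s-2r}_{p,q}$) and plays no dynamical role.

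First I would eliminate $m$ in favour of $u$. Since $A=(1-\partial_x^2)^r$ commutes with $\partial_x$, we have $m_x = Au_x$, so applying $A^{-1}$ to the first equation of (1.1) and adding and subtracting the transport term $uu_x$ gives
$$u_t + uu_x = A^{-1}\big([A,u]u_x\big) - bA^{-1}(u_x m) + A^{-1}(\alpha u_x) - \kappa A^{-1}(\rho\rho_x),$$
while the second equation is already in transport form
$$\rho_t + u\rho_x = -(b-1)u_x\rho .$$
Here $[A,u]=[(1-\partial_x^2)^r,u]$ is a commutator of order $2r-1$, and the crucial point is that $A^{-1}$ is smoothing of order $2r$, so that each term on the right-hand sides lies in the correct space: one checks, using the product and commutator estimates in Besov spaces together with the boundedness of $A^{-1}\partial_x^k$ from $B^\sigma_{p,q}$ into $B^{\sigma+2r-k}_{p,q}$, that the $u$-source belongs to $B^s_{p,q}$ and the $\rho$-source to $B^{s-2r+1}_{p,q}$ whenever $(u,\rho)\in B^s_{p,q}\times B^{s-2r+1}_{p,q}$. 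It is exactly here that the hypothesis $s>\max\{2r+\tfrac1p,\,2r+1-\tfrac1p\}$ is used, guaranteeing that $B^{s-2r}_{p,q}$ is an algebra-type space and that all the quadratic couplings close.

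Next I would define the approximating sequence. Set $(u^0,\rho^0)=(S_0u_0,S_0\rho_0)$ and, given $(u^n,\rho^n)$, let $(u^{n+1},\rho^{n+1})$ solve the linear transport problems obtained by freezing the velocity and the sources at step $n$, with data $(S_{n+1}u_0,S_{n+1}\rho_0)$. By the linear transport estimates and Gronwall's inequality one obtains a bound of the form $\|(u^{n+1},\rho^{n+1})(t)\|\le C\|(u_0,\rho_0)\|\exp\!\big(C\int_0^t\|(u^n,\rho^n)\|\,d\tau\big)$; a standard induction then shows that the whole sequence stays in a ball of $E^s_{p,q}(T)\times E^{s-2r+1}_{p,q}(T)$ for a $T>0$ depending only on the norm of the data. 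To pass to the limit, I would estimate the differences $(u^{n+1}-u^n,\rho^{n+1}-\rho^n)$, which again satisfy transport equations, and show they form a Cauchy sequence in the lower-regularity space $\mathcal C([0,T];B^{s-1}_{p,q}\times B^{s-2r}_{p,q})$; interpolation with the uniform high-regularity bound yields a limit $(u,\rho)\in E^s_{p,q}(T)\times E^{s-2r+1}_{p,q}(T)$ solving (1.1). Continuity in time with values in the top space, and the distinction between $q<\infty$ (where one recovers the strong $s'=s$ topology) and $q=\infty$ (where only $s'<s$ is available), follow from the continuity properties built into the transport theory. Uniqueness and the continuity of the data-to-solution map come from the same difference estimate, the latter refined by a Bona--Smith regularisation argument when the strong topology is required.

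The main obstacle is the nonlinear estimate in the reformulation step: one must verify that the commutator term $A^{-1}[A,u]u_x$ and the products $A^{-1}(u_x m)$, $A^{-1}(\rho\rho_x)$, $u_x\rho$ genuinely map $B^s_{p,q}\times B^{s-2r+1}_{p,q}$ into itself with the correct balance between the two regularity indices $s$ and $s-2r+1$. The fractional order $2r$ of $A$ makes the commutator analysis delicate, and the asymmetry of the two components forces one to track carefully which factor carries the lower regularity; it is precisely the requirement that these estimates close for the full range $1\le p,q\le\infty$ that dictates the threshold $s>\max\{2r+\tfrac1p,\,2r+1-\tfrac1p\}$. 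The convergence step, where a derivative is lost and must be recovered by interpolation, is the second delicate point.
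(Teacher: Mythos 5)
Your proposal is correct in outline and shares the paper's overall architecture: a Friedrichs-type iteration with frozen velocity and truncated data $S_{k+1}u_0$, $S_{k+1}\rho_0$; uniform bounds in $E^s_{p,q}(T)\times E^{s-2r+1}_{p,q}(T)$ on a time $T$ depending only on the data norm; a Cauchy-sequence argument one derivative lower in $\mathcal{C}([0,T];B^{s-1}_{p,q})\times\mathcal{C}([0,T];B^{s-2r}_{p,q})$; interpolation to recover the top regularity; and uniqueness plus continuous dependence from a difference estimate (with a viscosity/regularisation argument for the strong topology). Where you genuinely diverge is the reformulation step. You invert $A=(1-\partial_x^2)^r$ to obtain a transport equation for $u$ itself, at the price of the commutator term $A^{-1}\bigl([A,u]u_x\bigr)$ of fractional order $2r-1$. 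The paper never inverts $A$ inside the nonlinearity: it keeps the transport equation at the level of $m$, namely $\partial_t m+u\partial_x m=\alpha u_x-bu_xm-\kappa\rho\rho_x$, whose right-hand side consists of \emph{pure products}, estimates these with Morse-type product estimates and the algebra property of $B^{s-2r}_{p,q}$, and then recovers the $u$-norms through the equivalence $\|u\|_{B^{s}_{p,q}}\approxeq\|m\|_{B^{s-2r}_{p,q}}$ (since $(1-\partial_x^2)^{\pm r}\in OPS^{\pm 2r}$). This buys a proof with no commutator analysis at all, which matters here: for fractional $r$ and the full range $1\le p,q\le\infty$, the commutator estimate you defer to ``one checks'' is precisely the hardest point of your route (it requires a paradifferential argument that your sketch does not supply), whereas the paper's device shows it is avoidable. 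Conversely, your nonlocal formulation $u_t+uu_x=\text{(smoothing source)}$ is closer to the form the paper itself uses later for $r=1$ in Section 4, so your route is not unnatural, merely more expensive at this step.

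There is also one concrete gap relative to the claimed range of $s$: the theorem includes the borderline indices at which the linear transport estimate at regularity $1+\frac1p$ degenerates for general $q$. The paper must, and does, treat these separately — case (ii) of Lemma 2.1 handles $s=2r+2+\frac1p$ by interpolation with $\theta=\frac12\bigl(1-\frac1{2p}\bigr)$, and the Cauchy-sequence step of Lemma 2.2 additionally excludes $s=2r+1+\frac1p$ before invoking the same interpolation patch. Your proposal treats all admissible $s$ uniformly; as written, the difference estimates would fail at these exceptional values (in your formulation the $\rho$-difference estimate at level $B^{s-2r}_{p,q}$ hits exactly this criticality when $s=2r+1+\frac1p$), so you need to add the interpolation argument to cover the full statement. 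A minor further difference: for continuity of the data-to-solution map in the strong topology ($s'=s$, finite third index) you invoke Bona--Smith regularisation, while the paper uses a sequence of viscosity approximate solutions; both are standard and either closes the argument.
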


\begin{rem}
When $p=q=2$, the Besov space $B^s_{p,q}$
coincides with the Sobolev space $H^s$. Thus under the condition
$(u_0,\rho_0)\in H^s\times H^{s-2r+1}$ with $s>2r+\frac{1}{2}$ the above theorem implies
 that there exists a time $T>0$ such that the initial-value problem (1.1) has a unique
 solution $(u,\rho)\in \mathcal{C}([0,T];H^s)\cap \mathcal{C}^1([0,T];H^{s-1})\times \mathcal{C}([0,T];H^{s-2r+1})\cap \mathcal{C}^1([0,T];H^{s-2r}) $, and the map $(u_0,\rho_0) \mapsto (u,\rho)$ is continuous from a
 neighborhood of $(u_0,\rho_0)$ in $H^{s }\times H^{s-2r+1}$ into $\mathcal{C}([0,T];H^s)\cap \mathcal{C}^1([0,T];H^{s-1})\times \mathcal{C}([0,T];H^{s-2r+1})\cap \mathcal{C}^1([0,T];H^{s-2r})$.
\end{rem}

In the following, we denote $C>0$ a generic constant only depending on $p,q,s$. Uniqueness and
continuity with respect to the initial data are an immediate consequence of the following result.
\begin{lm}
 Let $1\leq p, q\leq +\infty$ and   $s>\max\left\{2r +\frac{1}{p}, 2r+1-\frac{1}{p}\right\}$ with $r\geq1$, the function $\alpha(x)\equiv c\in \mathbb{R}$ or $\alpha(\cdot)\in B_{p,q}^{s-2r }$. Suppose that  $(u_{i}, \rho_{i})\in \{L^\infty([0,T];B_{p,q}^s)\cap \mathcal{C}([0,T];\mathcal{S}')\}\times \{L^\infty([0,T];B_{p,q}^{s-2r+1})\cap \mathcal{C}([0,T];\mathcal{S}')\}$ $(i=1,2)$
be two
given solutions of the initial-value problem (1.1) with the initial data $(u_{i}(0),\rho_{i}(0))\in B_{p,q}^s\times B_{p,q}^{s-2r+1}$  ($i=1,2$), and denote $\rho_{12}=\rho_{1}-\rho_{2},u_{12}=u_{1}-u_{2},$ i.e.,  $m_{12}=m_{1}-m_{2} $. Then for every $t\in[0,T]$, we have

(i) if $s>\max\left\{2r+\frac{1}{p}, 2r+1-\frac{1}{p}\right\} $ and $s\neq 2r+2+\frac{1}{p}$, then
\begin{equation}
\begin{split}
\|u_{12}\|&_{B^{s-1}_{p,q}}+\|\rho_{12}\|_{B^{s-2r}_{p,q}} \leq \left(\| u_{12} (0)\|_{B^{s-1}_{p,q}} +\|\rho_{12} (0)\|_{B^{s-2r}_{p,q}} \right)  \exp
\left(C\int^{t}_{0}\Gamma_{s}(t,\cdot)d\tau\right) .
\end{split}
\end{equation}
where
\begin{equation*}
\Gamma_{s}(t,\cdot)=\left( \|  u_{1}  \|_{B^{s}_{p,q}}  + \|  u_{2}  \|_{B^{s}_{p,q}} +\|\rho_{1}\|_{B^{s-2r+1}_{p,q}}  +\|\rho_{2}\|_{B^{s-2r+1}_{p,q}}  +\|\alpha\|  \right),
\end{equation*}
with
$$\|\alpha\| \doteq\left\{
\begin{array}{llll}
|c|, &\alpha(x)\equiv c\in \mathbb{R} ,\\
 \|\alpha\|_{B^{s-2r  }_{p,q}},&\alpha(\cdot)\in B_{p,q}^{s-2r }.
\end{array}
\right.$$

(ii) If $s= 2r+2+1/p$, then
\begin{equation*}
\begin{split}
\|u_{12}\|&_{B^{s-1}_{p,q}}+\|v_{12}\|_{B^{s-1}_{p,q}} \leq C\left(\| u_{12} (0)\|_{B^{s-1}_{p,q}} +\| u_{12} (0)\|_{B^{s-1}_{p,q}} \right) ^\theta \Gamma_{s}^{1-\theta}(t,\cdot)   \exp
\left(C\theta\int^{T}_{0}\Gamma_{s}(t,\cdot)d\tau\right),
\end{split}
\end{equation*}
where $\theta\in(0,1)$(i.e., $\theta=\frac{1}{2}(1-\frac{1}{2p})$) and $\Gamma_{s}(t,\cdot)$ as in case (i).
\end{lm}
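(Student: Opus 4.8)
The plan is to establish the two stability estimates by deriving transport equations for the differences $u_{12}$ and $\rho_{12}$, then applying the standard $B^s_{p,q}$ a priori estimates for transport equations (the Littlewood--Paley commutator estimates recorded in the preliminaries). First I would invert the inertia operator $A=(1-\partial_x^2)^r$ in the first equation of (1.1). Writing $m=Au$ and applying $A^{-1}$, the evolution for each $u_i$ becomes a transport equation $\partial_t u_i + u_i\partial_x u_i = F(u_i,\rho_i,\alpha)$, where $F$ collects the lower-order terms $A^{-1}\bigl(\alpha u_x - (b-1)u_x m - \kappa\rho\rho_x\bigr)$ (after moving $um_x$ into transport form via $u_i m_{i,x}+bu_{i,x}m_i = u_i\partial_x(Au_i) + bu_{i,x}Au_i$); the point is that $A^{-1}$ gains $2r$ derivatives, so $F$ maps the solution's regularity class into $B^{s}_{p,q}$. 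Similarly $\rho_i$ already solves the transport equation $\partial_t\rho_i + u_i\partial_x\rho_i = -(b-1)u_{i,x}\rho_i$.

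Subtracting the equations for $i=1,2$ gives
\begin{equation*}
\partial_t u_{12} + u_1\partial_x u_{12} = -u_{12}\partial_x u_2 + \bigl(F(u_1,\rho_1,\alpha)-F(u_2,\rho_2,\alpha)\bigr),
\end{equation*}
\begin{equation*}
\partial_t \rho_{12} + u_1\partial_x\rho_{12} = -u_{12}\partial_x\rho_2 -(b-1)\bigl(u_{1,x}\rho_{12}+u_{12,x}\rho_2\bigr).
\end{equation*}
The strategy is to estimate $u_{12}$ in $B^{s-1}_{p,q}$ and $\rho_{12}$ in $B^{s-2r}_{p,q}$ (one derivative below the solution spaces, which is forced because we are differencing nonlinear terms). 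I would apply the transport estimate in $B^{s-1}_{p,q}$ to the first equation, controlling the transport coefficient through $\|\partial_x u_1\|_{B^{s-1}_{p,q}}\lesssim\|u_1\|_{B^s_{p,q}}$ using $s-1>\tfrac1p$, and bounding the right-hand side by the product/composition estimates in Besov spaces: the term $A^{-1}$ acting on quadratic expressions is handled by the smoothing of $A^{-1}$ together with the algebra structure of $B^{\sigma}_{p,q}$ when $\sigma>\tfrac1p$. This produces a Gr\"onwall-type inequality whose coefficient is exactly $\Gamma_s(t,\cdot)$, and integrating yields the exponential bound in part (i).

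The main obstacle will be the endpoint case (ii), $s=2r+2+\tfrac1p$, where the Besov product and transport estimates lose their clean form because the index $s-2r-1=\tfrac1p+1$ sits at the boundary where the usual embedding $B^{\tfrac1p}_{p,q}\hookrightarrow L^\infty$ and the algebra property degenerate. Here the plan is to avoid estimating directly at the critical index by interpolation: I would run the argument at two nearby non-critical regularities bracketing $s$, obtain exponential bounds there, and then interpolate with the explicit exponent $\theta=\tfrac12(1-\tfrac1{2p})$, which is chosen precisely so that the interpolated logarithmic loss is absorbed; the factor $\Gamma_s^{1-\theta}$ and the $\theta$ inside the exponential in the stated inequality are the signature of this interpolation. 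Throughout, the care needed is to verify that every product and composition estimate respects the two different regularity levels of $u$ (level $s$) and $\rho$ (level $s-2r+1$), so that the smoothing by $A^{-1}$ exactly compensates the $2r$-derivative gap; this bookkeeping, rather than any single hard inequality, is where the real work lies, and it is what forces the hypothesis $s>\max\{2r+\tfrac1p,\,2r+1-\tfrac1p\}$.
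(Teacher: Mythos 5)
Your overall architecture (transport estimates plus Besov product estimates plus Gronwall for part (i), interpolation for the critical index in part (ii)) matches the paper, and your treatment of the $\rho$-equation is exactly the paper's. But your handling of the $u$-equation has a genuine gap: you propose to invert $A=(1-\partial_x^2)^r$ and claim the velocity equation becomes $\partial_t u_i+u_i\partial_x u_i=F$ with $F$ consisting of terms smoothed by $A^{-1}$. The identity you invoke, $u_im_{i,x}+bu_{i,x}m_i=u_i\partial_x(Au_i)+bu_{i,x}Au_i$, is just the definition $m=Au$ and does not produce transport form; to get $u_i\partial_x u_i$ on the left you must compare $A^{-1}\bigl(u_i\partial_x(Au_i)\bigr)$ with $u_i\partial_x u_i$, which generates the commutator term $A^{-1}[u_i,A]\partial_x u_i$. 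Without it, the claim that $F$ gains $2r$ derivatives is false; with it, you need a Besov commutator estimate for $[A,u]$ where $A$ has \emph{fractional} order $2r$ (and, for the difference estimate, versions with $u_{12}$ at the lower regularity $s-1$), none of which you state or prove. For $r=1$ the classical nonlocal CH reformulation with the kernel $\frac12 e^{-|x|}$ makes this explicit, but for fractional $r\geq1$ no such explicit convolution form exists, and this is precisely the delicate point. The paper sidesteps it entirely: it never inverts $A$ against the transport term, but writes the transport equation satisfied by $m_{12}$ itself, $\partial_t m_{12}+u_1\partial_x m_{12}=-u_{12}\partial_x m_2-bm_{12}\partial_x u_1-bm_2\partial_x u_{12}-\alpha\partial_x u_{12}-\kappa\partial_x[(\rho_1+\rho_2)\rho_{12}]$, applies the transport estimate (Theorem 3.14 in \cite{B}) in $B^{s-2r-1}_{p,q}$, bounds the right-hand side by Morse-type and algebra estimates (splitting the range $\max\{2r+\frac1p,2r+1-\frac1p\}<s\leq 2r+1+\frac1p$, where one checks $s_1+s_2=2s-4r-1>0$, from $s>2r+1+\frac1p$, where $B^{s-2r-1}_{p,q}$ is an algebra), and then converts back through the isomorphism $\|u_{12}\|_{B^{s-1}_{p,q}}\approxeq\|m_{12}\|_{B^{s-2r-1}_{p,q}}$, valid since $(1-\partial_x^2)^r\in OP(S^{2r})$. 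That equivalence is the only pseudodifferential input needed, whereas your route requires a strictly stronger commutator lemma.

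A secondary inaccuracy concerns part (ii). You propose to "run the argument at two nearby non-critical regularities bracketing $s$, obtain exponential bounds there, and interpolate." This cannot work as stated at the upper index: the difference estimate only closes one derivative below the solution regularity, so at $B^{2r+2+\frac1{2p}}_{p,q}$ no Gronwall bound for $u_{12}$ is available. The paper instead applies the case-(i) estimate only at the lower index $B^{2r+\frac1{2p}}_{p,q}$ and bounds the upper norm crudely by the a priori size of the solutions, using $2r+2+\frac1{2p}<s=2r+2+\frac1p$, which is exactly why the stated inequality carries the factor $\Gamma_s^{1-\theta}$ rather than a pure exponential. Also, $\theta=\frac12\bigl(1-\frac1{2p}\bigr)$ is pure index matching, $s-1=\theta\bigl(2r+\tfrac1{2p}\bigr)+(1-\theta)\bigl(2r+2+\tfrac1{2p}\bigr)$; there is no logarithmic loss being absorbed.
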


\begin{proof}   It is obvious that
$u_{12}\in L^\infty([0,T];B_{p,q}^s)\cap \mathcal{C}([0,T];\mathcal{S}')$ and $\rho_{12}\in L^\infty([0,T];B_{p,q}^{s-2r+1})\cap \mathcal{C}([0,T];\mathcal{S}')$
which implies that $u_{12}\in \mathcal{C}([0,T];B_{p,q}^{s-1})$, $ \rho_{12}\in \mathcal{C}([0,T];B_{p,q}^{s-2r})$, and $(u_{12},\rho_{12})$   solves the transport equations
\begin{equation*}
\left\{
\begin{array}{llll}
&\partial_{t}m_{12}+ u_{1}  \partial_{x} m_{12}=- u_{12}   \partial_{x}m_{2} -b m_{12} \partial_x u_{1}   -b m_{2} \partial_x u_{12}  -\alpha \partial_xu_{12}-\kappa \partial_x[(\rho_1+\rho_2)\rho_{12}] ,\\
&\partial_{t}\rho_{12}+u_{1} \partial_{x} \rho_{12} =- u_{12} \partial_{x}\rho_{2} -(b-1)\rho_{12}\partial_xu_{1}  -(b-1)\rho_{2}\partial_xu_{12}  ,\\
&u_{12}|_{t=0}=u_{12}(0)\doteq u_{1}(0)-u_{2}(0),\,\ \rho_{12}|_{t=0}=\rho_{12}(0)\doteq \rho_{1}(0)-\rho_{2}(0),\,\ m_{12}=(1-\partial_x^2)^ru_{12}.
\end{array}
\right.
\end{equation*}
\textbf{Case I:   $s>\max\left\{2r+\frac{1}{p}, 2r+1-\frac{1}{p}\right\} $  and $s\neq 2r+2+\frac{1}{p}$.} According to Theorem 3.14 in \cite{B}, we have
\begin{equation}
\begin{split}
 \|m_{12}\|_{B^{s-2r-1}_{p,q}} \leq&  \|m_{12}(0)\|_{B^{s-2r-1}_{p,q}} +C\int_0^t\left(\|\partial_{x} u_{1}  \|_{B^{s-2r-2}_{p,q}} +\|\partial_{x} u_{1} \|_{B^{\frac{1}{p}}_{p,q}\cap L^\infty}\right) \|m_{12}\|_{B^{s-2r-1}_{p,q}}d\tau\\
 &+C\int_0^t \| u_{12}   \partial_{x}m_{2} +b m_{12} \partial_x u_{1}   +b m_{2} \partial_x u_{12}  +\alpha \partial_xu_{12}+\kappa \partial_x[(\rho_1+\rho_2)\rho_{12}]  \|_{B^{s-2r-1}_{p,q}}d\tau.
\end{split}
\end{equation}
Since $s>\max \left\{2r +\frac{1}{p}, 2r+1-\frac{1}{p}\right\}\geq 2r +\frac{1}{p}$ with $r\geq1$, we obtain
\begin{align*}
 \|\partial_{x} u_{1}  \|_{B^{s-2r-2}_{p,q}} +\|\partial_{x} u_{1}  \|_{B^{\frac{1}{p}}_{p,q}\cap L^\infty} \leq 2\|\partial_{x} u_{1}  \|_{B^{s-2r }_{p,q}}\leq C\|u_{1}\|_{B^{s-r}_{p,q}} .
\end{align*}
Note that the inertia operator $(1-\partial_x^2)\in OP (S^2)$, by Proposition 2.2 (7) in \cite{ZMW}, we find that for all $s\in \mathbb{R}$ 
\begin{equation}
\|u_{i}\|_{B^{s}_{p,q}} \approxeq \|m_{i}\|_{B^{s-2r}_{p,q}} .
\end{equation}
If $\max\left\{ 2r+\frac{1}{p},2r+1-\frac{1}{p} \right\} <s\leq 2r+1+\frac{1}{p}$, by Morse-type estimate  2.5 (2) in \cite{ZMW} with $s_1=s-2r-1,s_2=s-2r$,  it is easy to check that $2r+1\leq\max\left\{ 2r+\frac{1}{p},2r+1-\frac{1}{p} \right\}$, i.e., $s_1+s_2=2s-4r-1>0$. Recall that $B^{s-2r}_{p,q}$
being an algebra for $s-2r\geq1/p$, one can yields
\begin{equation*}
\begin{split}
 \|u_{12}   \partial_{x}m_{2} +&b m_{12} \partial_x u_{1}   +b m_{2} \partial_x u_{12}  +\alpha \partial_xu_{12}+\kappa \partial_x[(\rho_1+\rho_2)\rho_{12}] \|_{B^{s-2r-1}_{p,q}}    \\
&\leq C\bigg( \|  u_{12}  \|_{B^{s-2r }_{p,q}}  \|\partial_{x}m_{ 2} \|_{B^{s-2r-1}_{p,q}} +\|m_{12}\|_{B^{s-2r-1}_{p,q}}\|\partial_xu_{1 }  \|_{B^{s-2r }_{p,q}}+\|m_{ 2}\|_{B^{s-2r}_{p,q}}\|\partial_xu_{12 }  \|_{B^{s-2r -1 }_{p,q}}\\
&\quad+\|\alpha\| \|u_{12}\|_{B^{s-2r  }_{p,q}}+  \|  (\rho_1+\rho_2)\rho_{12}  \|_{B^{s-2r }_{p,q}}\bigg)\\
&\leq C\bigg( \|  u_{12}  \|_{B^{s-2r }_{p,q}}  \|u_{ 2} \|_{B^{s  }_{p,q}} +\|u_{12}\|_{B^{s -1}_{p,q}}\|u_{1 }  \|_{B^{s-2r+1 }_{p,q}}+\|u_{ 2}\|_{B^{s }_{p,q}}\|u_{12 }  \|_{B^{s-2r }_{p,q}}\\
&\quad+\|u_{12}\|_{B^{s-2r }_{p,q}}+  \|  (\rho_1+\rho_2)\rho_{12}  \|_{B^{s-2r }_{p,q}}\bigg)\\
&\leq C \|  u_{12}  \|_{B^{s-1 }_{p,q}}  \bigg(   \|u_{1 }  \|_{B^{s  }_{p,q}}+\|u_{ 2}\|_{B^{s  }_{p,q}}  +\|\alpha\| \bigg)+\left( \|  \rho_1 \|_{B^{s-2r}_{p,q}}+\|\rho_2  \|_{B^{s-2r }_{p,q}}\right)\|\rho_{12}  \|_{B^{s-2r }_{p,q}}.
\end{split}
\end{equation*}
For $s>2r+1+\frac{1}{p}$, the above inequality   also holds true in view of the fact that $B^{s-2r-1}_{p,q}$ is an algebra. Thus, we may derive
\begin{align*}
 \|u_{12}\|_{B^{s-1}_{p,q}} \leq&  \|u_{12}(0)\|_{B^{s-1}_{p,q}} +C\int_0^t \left(\|u_{12} (\tau,\cdot) \|_{B^{s-1 }_{p,q}}+\|\rho_{12} (\tau,\cdot) \|_{B^{s-2r }_{p,q}}\right) \Gamma_s(\tau )d\tau.
\end{align*}
Similarly, we can also treat the inequality for the component   $\rho$:
\begin{equation}
\begin{split}
 \|\rho_{12}\|_{B^{s  -2r }_{p,q}} \leq&  \|\rho_{12}(0)\|_{B^{s-2r }_{p,q}} +C\int_0^t\left(\|\partial_{x} u_{1}  \|_{B^{s-2r -1}_{p,q}} +\|\partial_{x} u_{1} \|_{B^{\frac{1}{p}}_{p,q}\cap L^\infty}\right) \|\rho_{12}\|_{B^{s-2r  }_{p,q}}d\tau\\
 &+C\int_0^t \| u_{12} \partial_{x}\rho_{2} +(b-1)\rho_{12}\partial_xu_{1}  +(b-1)\rho_{2}\partial_xu_{12}  \|_{B^{s-2r   }_{p,q}}d\tau.
\end{split}
\end{equation}
The condition $s -2r-1> \frac{1}{p}$ follows that
\begin{align*}
 \|\partial_{x} u_{1}  \|_{B^{s -2r-1}_{p,q}} +\|\partial_{x} u_{1}  \|_{B^{\frac{1}{p}}_{p,q}\cap L^\infty} \leq 2\|\partial_{x} u_{1}  \|_{B^{s-2r-1 }_{p,q}}\leq C\|u_{1}\|_{B^{s-2r }_{p,q}} .
\end{align*}
If $2r+\frac{1}{p}  <s$, then $B^{s-2r}_{p,q}$
being an algebra, we arrive at
\begin{equation}
\begin{split}
 \| u_{12} \partial_{x}\rho_{2} &+(b-1)\rho_{12}\partial_xu_{1}  +(b-1)\rho_{2}\partial_xu_{12}  \|_{B^{s-2r}_{p,q}} \\
&\leq C\bigg( \|  u_{12}  \|_{B^{s-2r }_{p,q}}  \|\rho_{ 2} \|_{B^{s-2r+1}_{p,q}} +\|\rho_{12}\|_{B^{s-2r }_{p,q}}\|u_{1 }  \|_{B^{s-2r+1 }_{p,q}}+\|\rho_{ 2}\|_{B^{s-2r }_{p,q}}\|u_{12 }  \|_{B^{s-2r+1  }_{p,q}} \bigg)\\
&\leq C \|  u_{12}  \|_{B^{s-1 }_{p,q}}   \|\rho_{ 2} \|_{B^{s -2r+1}_{p,q}}  +  \| u_1 \|_{B^{s-1}_{p,q}}  \|\rho_{12}  \|_{B^{s-2r }_{p,q}}.
\end{split}
\end{equation}
Combining (2.4)-(2.5), we get
\begin{align*}
 \|\rho _{12}\|_{B^{s-2r}_{p,q}} \leq&  \|\rho_{12}(0)\|_{B^{s-2r}_{p,q}} +C\int_0^t  \left(\|u_{12} (\tau,\cdot) \|_{B^{s-1 }_{p,q}}+\|\rho_{12} (\tau,\cdot) \|_{B^{s-2r }_{p,q}}\right) \Gamma_s(\tau )  d\tau.
\end{align*}
Therefore
\begin{align*}
  \|u _{12}\|_{B^{s-1}_{p,q}}  +\|\rho _{12}\|_{B^{s-2r}_{p,q}} \leq &\|u_{12}(0)\|_{B^{s-1}_{p,q}}  + \|\rho_{12}(0)\|_{B^{s-2r}_{p,q}}+C\int_0^t\left(  \|u_{12}  \|_{B^{s-1}_{p,q}} +\|\rho_{12}  \|_{B^{s-2r}_{p,q}}\right)\Gamma_s(\tau,\cdot)d\tau.
\end{align*}
Applying Gronwall's lemma to the above inequality leads to (i).

\textbf{For the critical case (ii) $s=2r+2+1/p$}, we here use the interpolation method to deal with it. Indeed, if we choose $\theta=\frac{1}{2}(1-\frac{1}{2p})\in (0,1)$, then $s-1=2r+1+\frac{1}{p}=\theta (2r+\frac{1}{2p})+(1-\theta)(2r+2+\frac{1}{2p})$ . According to Proposition 2.2(5) in \cite{ZMW} and the above inequality, we have
\begin{align*}
  \|u _{12}\|_{B^{2r+1+1/p}_{p,q}}  +\|\rho _{12}\|_{B^{2r+1+1/p}_{p,q}} &\leq \|u _{12}\|_{B^{2r+\frac{1}{2p}}_{p,q}}^{\theta} \|u _{12}\|_{B^{2r+2+\frac{1}{2p}}_{p,q}}^{1-\theta} +\|\rho_{12}\|_{B^{2r+\frac{1}{2p}}_{p,q}}^{\theta} \|\rho _{12}\|_{B^{2r+2+\frac{1}{2p}}_{p,q}}^{1-\theta} \\
  &\leq \left(\|u _{12}\|_{B^{2r+\frac{1}{2p}}_{p,q}}+\|\rho_{12}\|_{B^{2r+\frac{1}{2p}}_{p,q}}\right)^{\theta} \left(\|u _{12}\|_{B^{2r+2+\frac{1}{2p}}_{p,q}}^{1-\theta}+\|\rho _{12}\|_{B^{2r+2+\frac{1}{2p}}_{p,q}}^{1-\theta}\right)\\
   &\leq C\left(\| u_{12} (0)\|_{B^{3+\frac{1}{p}}_{p,q}} +\| \rho_{12} (0)\|_{B^{3+\frac{1}{p}}_{p,q}} \right) ^\theta \Gamma_{2r+2+\frac{1}{p}}^{1-\theta}(t,\cdot)   \exp
\left(C\theta\int^{T}_{0}\Gamma_{2r+2+\frac{1}{p}}(t,\cdot)d\tau\right),
\end{align*}
which yields the desired result.
\end{proof}
Now, let us start the proof of Theorem 2.1, which is motivated by the proof of local existence theorem about Camassa-Holm type equations  in \cite{D23,ZMW}. We shall use the classical Friedrichs regularization method to construct the approximate solutions to  Eq.(1.1) .
\begin{lm}
Let $p, q,r,s$ and $\alpha$ be as in the statement of Lemma 2.1. Assume that $u(0) =
\rho(0):=0$. There exists a sequence of smooth functions $(u_k,\rho_k)\in \mathcal{C}(\mathbb{R}^+; B_{p,q}^\infty)^2$ solving

$$(T_k) \quad\left\{
\begin{array}{llll}
\partial_{t}m_{k+1}+u_{k }\partial_xm_{k+1}-\alpha u_{k }+b\partial_xu_{k}m_{k}+\kappa \rho_{k}\partial_x\rho_{k}=0, \\
\partial_{t}\rho_{k+1}+u_{k }\partial_x\rho_{k+1}+(b-1)\partial_xu_{k }\rho_{k}=0,\\
u_{k+1}(0)=S_{k+1}u(0),\rho_{k+1}(0)=S_{k+1}\rho(0).
\end{array}
\right.$$

 Moreover, there is a positive time $T$ such that the solutions satisfying the following properties:

(i) $(u_k,\rho_k)_{k\in \mathbb{N}}$ is uniformly bounded in $E_{p,q}^{s}(T)\times E_{p,q}^{s-2r+1}(T)$.

(ii) $(u_k,\rho_k)_{k\in \mathbb{N}}$ is a Cauchy sequence in $\mathcal{C}([0,T];B_{p,q}^{s-1})\times \mathcal{C}([0,T];B_{p,q}^{s-2r })$.
\end{lm}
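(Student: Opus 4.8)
The plan is to run the standard Friedrichs successive-approximation scheme adapted to the transport structure of (1.1), following the template for Camassa--Holm type equations in \cite{D23,ZMW}. First I would set up the iteration: starting from the zeroth iterate $(u_0,\rho_0):=(0,0)\in\mathcal{C}(\mathbb{R}^+;B^\infty_{p,q})^2$ and assuming inductively that $(u_k,\rho_k)\in\mathcal{C}(\mathbb{R}^+;B^\infty_{p,q})^2$, the two equations in $(T_k)$ are \emph{linear} transport equations for the unknowns $m_{k+1}=(1-\partial_x^2)^r u_{k+1}$ and $\rho_{k+1}$, with transport velocity $u_k$, smooth sources built from $u_k,m_k,\rho_k,\alpha$, and data $S_{k+1}u(0),S_{k+1}\rho(0)\in B^\infty_{p,q}$. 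Since $u_k\in B^s_{p,q}\hookrightarrow C^1$ (because $s>2r+\tfrac1p\ge 2+\tfrac1p$) and all data and sources are smooth, the existence theory for linear transport equations with smooth coefficients (\cite{B}) produces a unique solution $(u_{k+1},\rho_{k+1})\in\mathcal{C}(\mathbb{R}^+;B^\infty_{p,q})^2$, with $u_{k+1}=(1-\partial_x^2)^{-r}m_{k+1}$. This closes the construction.

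For part (i) I would derive a nonlinear recursion from the transport estimate. Applying Theorem 3.14 in \cite{B} to the $m_{k+1}$-equation in $B^{s-2r}_{p,q}$ and to the $\rho_{k+1}$-equation in $B^{s-2r+1}_{p,q}$, and using the norm equivalence $\|u_{k+1}\|_{B^s_{p,q}}\approx\|m_{k+1}\|_{B^{s-2r}_{p,q}}$ from (2.3), gives
\[
\|u_{k+1}(t)\|_{B^{s}_{p,q}}+\|\rho_{k+1}(t)\|_{B^{s-2r+1}_{p,q}}\le Ce^{C\int_0^t U_k\,d\tau}\Big(U_{k+1}(0)+\int_0^t\big(\|F_k\|_{B^{s-2r}_{p,q}}+\|G_k\|_{B^{s-2r+1}_{p,q}}\big)d\tau\Big),
\]
where $U_k(t)=\|u_k\|_{B^s_{p,q}}+\|\rho_k\|_{B^{s-2r+1}_{p,q}}$, $F_k=\alpha u_k-b\partial_xu_k\,m_k-\kappa\rho_k\partial_x\rho_k$, $G_k=-(b-1)\partial_xu_k\,\rho_k$, and the exponent bounds the transport coefficient $\|\partial_x u_k\|_{B^{s-2r-1}_{p,q}}+\|\partial_x u_k\|_{B^{1/p}_{p,q}\cap L^\infty}\le C\|u_k\|_{B^s_{p,q}}$ exactly as in (2.2). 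The sources are controlled as in Lemma 2.1: the algebra property of $B^{s-2r}_{p,q}$ and $B^{s-2r+1}_{p,q}$ (valid since $s-2r\ge\tfrac1p$) together with $r\ge1$ yields $\|F_k\|_{B^{s-2r}_{p,q}}+\|G_k\|_{B^{s-2r+1}_{p,q}}\le C(U_k^2+\|\alpha\|U_k)$. Using $U_{k+1}(0)\le C\,U(0)$ with $U(0):=\|u(0)\|_{B^s_{p,q}}+\|\rho(0)\|_{B^{s-2r+1}_{p,q}}$ (uniform boundedness of the truncation $S_{k+1}$), I would fix $M=4CU(0)$ and choose $T>0$ so small that $e^{CMT}\le2$ and $CT(M^2+\|\alpha\|M)\le CU(0)$; an induction on $k$ then gives $\sup_{[0,T]}U_k\le M$, the desired uniform bound. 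The $\mathcal{C}^1$-in-time bounds completing membership in $E^s_{p,q}(T)\times E^{s-2r+1}_{p,q}(T)$ follow by reading off $\partial_tm_{k+1}=-u_k\partial_xm_{k+1}+F_k\in B^{s-2r-1}_{p,q}$ and $\partial_t\rho_{k+1}\in B^{s-2r}_{p,q}$ from the equations and applying (2.3).

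For part (ii) I would estimate the consecutive differences $\delta u_k=u_{k+1}-u_k$, $\delta\rho_k=\rho_{k+1}-\rho_k$, $\delta m_k=m_{k+1}-m_k$. Subtracting $(T_k)$ from $(T_{k+1})$ and rewriting the convection terms as $u_k\partial_xm_{k+1}-u_{k-1}\partial_xm_k=u_k\partial_x\delta m_k+\delta u_{k-1}\partial_xm_k$ shows that $\delta m_k$ and $\delta\rho_k$ solve transport equations with velocity $u_k$ and sources linear in $(\delta u_{k-1},\delta m_{k-1},\delta\rho_{k-1})$ with coefficients bounded by $M$. Estimating one regularity lower — $\delta m_k$ in $B^{s-2r-1}_{p,q}$ (i.e., $\delta u_k$ in $B^{s-1}_{p,q}$) and $\delta\rho_k$ in $B^{s-2r}_{p,q}$ — and using $\|\delta u_k(0)\|_{B^{s-1}_{p,q}}+\|\delta\rho_k(0)\|_{B^{s-2r}_{p,q}}\le C\,2^{-k}U(0)$ (since $S_{k+1}-S_k$ selects the $k$-th dyadic block), Gronwall's inequality gives, for $T$ possibly further reduced, a recursion $D_k\le C2^{-k}+\tfrac12 D_{k-1}$ with $D_k:=\sup_{[0,T]}(\|\delta u_k\|_{B^{s-1}_{p,q}}+\|\delta\rho_k\|_{B^{s-2r}_{p,q}})$. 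Iterating yields $D_k\le C(k+1)2^{-k}$, which is summable, so $(u_k,\rho_k)$ is Cauchy in $\mathcal{C}([0,T];B^{s-1}_{p,q})\times\mathcal{C}([0,T];B^{s-2r}_{p,q})$.

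The main obstacle I anticipate is the product estimate in the difference step, specifically the term $\delta u_{k-1}\partial_xm_k$ and its $\rho$-analogue: here $\partial_xm_k\in B^{s-2r-1}_{p,q}$ loses one derivative relative to the uniform bound, so one must control $\|\delta u_{k-1}\,\partial_xm_k\|_{B^{s-2r-1}_{p,q}}$ by $\|\delta u_{k-1}\|_{B^{s-1}_{p,q}}\|m_k\|_{B^{s-2r}_{p,q}}$ through a Morse-type estimate that is valid precisely because $s>\max\{2r+\tfrac1p,2r+1-\tfrac1p\}$ and $r\ge1$ guarantee the required index inequalities $s-1>\tfrac1p$ and $(s-1)+(s-2r-1)=2s-2r-2>0$. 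Tracking these borderline conditions — rather than the transport estimates themselves, which are routine — is where the argument must be handled with care, and it is exactly where the hypotheses on $s$ are consumed.
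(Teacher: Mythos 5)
Your proposal follows essentially the same route as the paper: the Friedrichs scheme with linear transport theory for the construction (the paper invokes Lemma 2.3 of \cite{ZMW} where you invoke \cite{B}, but the content is identical), transport estimates at level $B^{s-2r}_{p,q}\times B^{s-2r+1}_{p,q}$ combined with the norm equivalence $\|u\|_{B^s_{p,q}}\approx\|m\|_{B^{s-2r}_{p,q}}$ and algebra/Morse product bounds for (i), and difference estimates one derivative lower with the $2^{-k}$ gain from $S_{k+j+1}-S_{k+1}$ for (ii). Two bookkeeping differences are worth noting, both harmless. For (i), you close the induction with a fixed constant $M=4CU(0)$ and a smallness condition on $T$, while the paper propagates the explicit bound $2z_0/(1-8Cz_0t)$ with $T^*$ chosen according to whether $\|\alpha\|$ vanishes; the two are equivalent, though the paper's form makes the dependence on $\alpha$ transparent. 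For (ii), you estimate consecutive differences and sum a geometric recursion $D_k\le C2^{-k}+\tfrac12 D_{k-1}$, which forces you to shrink $T$ further, whereas the paper estimates $u_{k+j+1}-u_{k+1}$ uniformly in $j$ and iterates the integral inequality $V^j_{k+1}\le C_T(2^{-k}+\int_0^t V^j_k)$ to get factorial decay $2^{-k}\sum_i (2TC_T)^i/i! + C_T(TC_T)^{k+1}/(k+1)!$ without touching $T$; again both yield the Cauchy property.

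There is, however, one genuine omission relative to the full statement. The lemma is asserted for all $s>\max\left\{2r+\frac1p,\,2r+1-\frac1p\right\}$, but the Besov transport estimate at the lowered regularity $B^{s-2r-1}_{p,q}$ used in your step (ii) is not available at the borderline $s-2r-1=1+\frac1p$ when $q>1$: the exponent $s=2r+2+\frac1p$ (and likewise $s=2r+1+\frac1p$ for the source estimates in the difference system) is an exceptional index for Theorem 3.14 of \cite{B}. Your closing paragraph tracks the index inequalities $s-1>\frac1p$ and $2s-2r-2>0$, but these do not cover this exceptional value, so as written your argument proves (ii) only for $s\neq 2r+2+\frac1p,\,2r+1+\frac1p$. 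The paper handles this explicitly by excluding these values in the main computation and then recovering the critical case $s=2r+2+\frac1p$ by interpolation between two nearby noncritical indices (choosing $\theta=\frac12\bigl(1-\frac1{2p}\bigr)$ so that $s-1=\theta\bigl(2r+\frac1{2p}\bigr)+(1-\theta)\bigl(2r+2+\frac1{2p}\bigr)$, as in case (ii) of Lemma 2.1); you would need to append the same interpolation step to make your proof cover the whole stated range of $s$.
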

\begin{proof}
Since all the data $S_{k+1}u_0$ and  $S_{k+1}\rho_0$ belong to $B_{p,q}^\infty$, Lemma 2.3 in \cite{ZMW} indicates 
that for all $k\in\mathbb{N}$, the equation $(T_k)$ has a global solution in 
$\mathcal{C}(\mathbb{R}^+;B_{p,q}^\infty)^2$.

For $s>\max\left\{2r+\frac{1}{p}, 2r+1-\frac{1}{p}\right\} $ and $s\neq 2r+2+\frac{1}{p}$, thanks to Lemma 2.2 in \cite{ZMW} and the proof of
Lemma 2.1, we have the following inequality for all $k\in\mathbb{N}$:
\begin{equation*}
\begin{split}
 &\|m_{k+1}\|_{B^{s-2r }_{p,q}}  \leq \exp\left( C\int_0^t\left(\|\partial_{x} u_{k}  (\tau)\|_{B^{s-2r-1}_{p,q}} +\|\partial_{x} u_{k} (\tau)\|_{B^{\frac{1}{p}}_{p,q}\cap L^\infty}\right) d\tau\right)\|m(0)\|_{B^{s-2r}_{p,q}}\\
 &+C\int_0^t\exp\left( C\int_\tau^t\left(\|\partial_{x} u_{k}  (\tau')\|_{B^{s-2r-1}_{p,q}} +\|\partial_{x} u_{k}  (\tau')\|_{B^{\frac{1}{p}}_{p,q}\cap L^\infty}\right) d\tau'\right)\|\alpha u_{k }+b\partial_xu_{k}m_{k}+\kappa \rho_{k}\partial_x\rho_{k}\|_{B^{s-2r }_{p,q}}d\tau.
\end{split}
\end{equation*}
and
\begin{equation*}
\begin{split}
 \|\rho_{k+1}\|_{B^{s-2r +1}_{p,q}} &\leq \exp\left( C\int_0^t\left(\|\partial_{x} u_{k}  (\tau)\|_{B^{s-2r }_{p,q}} +\|\partial_{x} u_{k} (\tau)\|_{B^{\frac{1}{p}}_{p,q}\cap L^\infty}\right) d\tau\right)\|\rho(0)\|_{B^{s-2r+1}_{p,q}}\\
&+C\int_0^t\exp\left( C\int_\tau^t\left(\|\partial_{x} u_{k}  (\tau')\|_{B^{s-2r }_{p,q}} +\|\partial_{x} u_{k}  (\tau')\|_{B^{\frac{1}{p}}_{p,q}\cap L^\infty}\right) d\tau'\right)\|(b-1)\partial_xu_{k }\rho_{k}\|_{B^{s-2r +1}_{p,q}}d\tau.
\end{split}
\end{equation*}
Due to $s>2r+\frac{1}{p}$, we know that $B^{s-2r}_{p,q}$ is an algebra and $B^{s-2r}_{p,q}\hookrightarrow L^\infty$. Furthermore, using the relationship $\|u \|u_{B^{s}_{p,q}} \approxeq \|m \|_{B^{s-2r}_{p,q}} $ implies
\begin{equation*}
\begin{split}
 \|u_{k+1}\|_{B^{s }_{p,q}} &\leq \exp\left(2C\int_0^t \|  u_{k} (\tau)\| _{B^{s }_{p,q}}   d\tau\right)\|u(0)\|_{B^{s }_{p,q}}\\
&+C\int_0^t\exp\left(2C\int_\tau^t \|  u_{k} (\tau')\| _{B^{s }_{p,q}}  d\tau'\right)\left(\|\alpha\|  \|u_{k }\| _{B^{s-2r  }_{p,q}}+\|u_{k }\|_{B^{s -2r+1}_{p,q}} \|u_{k }\|_{B^{s  }_{p,q}} +\|\rho_{k }\|_{B^{s-2r+1 }_{p,q}}^2\right)d\tau.
\end{split}
\end{equation*}
and
\begin{equation*}
\begin{split}
 \|\rho_{k+1}\|_{B^{s-2r +1}_{p,q}} &\leq \exp\left(2C\int_0^t \|  u_{k}  (\tau)\|_{B^{s-2r }_{p,q}}   d\tau\right)\|\rho(0)\|_{B^{s-2r+1}_{p,q}}\\
&+C\int_0^t\exp\left(2C\int_\tau^t \|  u_{k}  (\tau')\|_{B^{s-2r }_{p,q}}  d\tau'\right)\|  u_{k }\|_{B^{s-2r +2}_{p,q}}\|\rho_{k}\|_{B^{s-2r +1}_{p,q}}d\tau.
\end{split}
\end{equation*}
 Thus, adding the two resulted inequalities yields
\begin{equation}
\begin{split}
& \|u_{k+1}\|_{B^{s }_{p,q}}+ \|\rho_{k+1}\|_{B^{s-2r+1 }_{p,q}} \leq \exp\left(2C\int_0^t  \|  u_{k} (\tau)\| _{B^{s }_{p,q}}  d\tau\right)\left(\|u(0)\|_{B^{s }_{p,q}}+\|\rho(0)\|_{B^{s-2r+1 }_{p,q}}\right)\\
&\quad+C\int_0^t\exp\left(2C\int_\tau^t  \|  u_{k} (\tau')\| _{B^{s }_{p,q}}  d\tau'\right)\left(\|\alpha\|  \|u_{k }\| _{B^{s  }_{p,q}}+  \|u_{k }\|^2_{B^{s  }_{p,q}} +\|\rho_{k }\|_{B^{s-2r+1 }_{p,q}}^2+\|  u_{k }\|_{B^{s }_{p,q}}\|\rho_{k}\|_{B^{s-2r +1}_{p,q}}\right)d\tau\\
&\leq \exp\left(2C\int_0^t  \|  u_{k} (\tau)\| _{B^{s }_{p,q}}  d\tau\right)\bigg(\|u(0)\|_{B^{s }_{p,q}}+\|\rho(0)\|_{B^{s-2r+1 }_{p,q}}\bigg)\\
&\quad\quad+C\int_0^t\exp\left(2C\int_\tau^t  \|  u_{k} (\tau')\| _{B^{s }_{p,q}}  d\tau'\right)\left(\|u_{k }\|_{B^{s }_{p,q}} +\|\rho_{k }\|_{B^{s-2r+1 }_{p,q}} +||\alpha|| \right) \left(\|u_{k }\|_{B^{s }_{p,q}} +\|\rho_{k }\|_{B^{s-2r+ 1}_{p,q}}  \right)  d\tau.
\end{split}
\end{equation}
  Choosing
  $$T^*=\left\{
\begin{array}{llll}
 \min\left\{\frac{1}{8 C\left( \|u_0 \|_{B^{s}_{p,q}}+\|\rho_0\|_{B^{s-2r+1}_{p,q}} \right)  },\frac{1}{4C||\alpha||}\right\}, \text { for } ||\alpha||\neq0,\\
 \frac{1}{8  C\left( \|u_0 \|_{B^{s}_{p,q}}+\|\rho_0\|_{B^{s-2r+1}_{p,q}} \right)  },\,\ ||\alpha||=0.
\end{array}
\right.$$
Suppose by induction that for all $t\in [0,T^*)$
\begin{align}
 \|u_{k}(t) \|_{B^{s}_{p,q}}+\|\rho_{k}(t) \|_{B^{s-2r+1}_{p,q}}\leq\frac{ 2(\| u_{0}\|_{B^{s}_{p,q}}+\| \rho_{0}\|_{B^{s-2r+1}_{p,q}}) }{ 1-8  C (\| u_{0}\|_{B^{s}_{p,q}} +\| \rho_{0}\|_{B^{s-2r+1}_{p,q}} )t} = \frac{ 2z_0}{ 1-8 C z_0t} ,
\end{align}
where $z_0=\| u_{0}\|_{B^{s}_{p,q}}+\| \rho_{0}\|_{B^{s-2r+1}_{p,q}} $.

The condition $0\leq\tau<t<T^*\leq \frac{1}{8  C\left( \|u_0 \|_{B^{s}_{p,q}}+\|\rho_0\|_{B^{s-2r+1}_{p,q}} \right)  }$ implies
$$\exp\left(2C\int_\tau^t  \|  u_{k} (\tau')\| _{B^{s }_{p,q}}  d\tau'\right)\leq \exp\left(\int_\tau^t\left( \frac{4Cz_0}{ 1-8 C z_0\tau'}  \right) d\tau'\right)=\left(\frac{1-8 C  z_0 \tau}{1-8 C  z_0 t}\right)^\frac{1}{2 }, $$
and inserting the above inequality and (2.7) into (2.6), we obtain
\begin{align*}
 \|u_{k+1}\|_{B^{s }_{p,q}}+ \|\rho_{k+1}\|_{B^{s-2r+1 }_{p,q}}&\leq\frac{z_0}{ \sqrt{1-8  C  z_0  t } }+\frac{C}{ \sqrt{1-8 C  Z_0  t} } \int^{t}_{0} \left(\frac{4z_0^2}{(1-8  C  z_0 \tau)^\frac{3}{2}}+\frac{2z_0 ||\alpha||}{(1-8 C  z_0 \tau)^\frac{1}{2}}\right) d\tau\\
& =\frac{z_0}{  1-8  C  z_0  t   }+\frac{||\alpha||}{2 \sqrt{1-8 C  z_0  t } }\left(     1-\sqrt{1-8  C  z_0  t }\right)\\
& =\frac{z_0}{  1-8   C  z_0  t   }+\frac{||\alpha||}{2 (1-8   C  z_0  t ) }\left(   \sqrt{1-8   C  z_0  t }  -1+8  C  z_0  t\right)\\
& \leq\frac{z_0}{  1-8   C  z_0  t   }+\frac{4 C ||\alpha|| z_0  t}{  (1-8   C  z_0  t ) } \\
& \leq\frac{ 2z_0}{ 1-8 C z_0t},
\end{align*}
the last inequality we used $t<T^*\leq \frac{1}{4C||\alpha||}$ for $||\alpha||\neq0$.

Now, we prove that $(u_k,\rho_k)_{k\in\mathbb{R}}$ is uniformly bounded in $\mathcal{C}([0,T];B_{p,q}^{s}) \times   \mathcal{C}([0,T];B_{p,q}^{s-2r+1})$ for $T\in(0,T^*)$. Using the equation $(T_k)$ and the similar argument in the proof Lemma 2.1, one can easily prove that $(\partial_tu_k,\partial_t\rho_k)_{k\in\mathbb{R}}$ is uniformly bounded in $\mathcal{C}([0,T];B_{p, r}^{s-1}) \times \mathcal{C}([0,T];B_{p, r}^{s-2r})$, which yields that the sequence $(u_k,\rho_k)_{k\in\mathbb{R}}$ is uniformly bounded in $E_{p, r}^{s }(T) \times E_{p, r}^{s-2r+1 }(T)$.

Let us now show that $(u_k,\rho_k)_{k\in\mathbb{R}}$ is a Cauchy sequence in   $\mathcal{C}([0,T];B_{p,q}^{s-1})\times \mathcal{C}([0,T];B_{p,q}^{s-2r})$. In fact, for all $k,j\in \mathbb{N}$, from $(T_k)$, we have
$$ \left\{
\begin{array}{llll}
(\partial_t+u_{k+j} \partial_x)(m_{k+j+1}-m_{k+1})=F(t,x),\\
(\partial_t+u_{k+j} \partial_x)(\rho_{k+j+1}-\rho_{k+1})=G(t,x),\\
\end{array}
\right.$$
with
\begin{equation*}
\left\{
\begin{array}{llll}
F(t,x)=&- (u_{k+j }-u_{k })   \partial_{x}m_{k+1 } -b (m_{k+j }-m_{k }) \partial_x u_{k+j }    -b m_{k } \partial_x (u_{k+j }-u_{k })\\
  & -\alpha \partial_x(u_{k+j }-u_{k })-\kappa \partial_x[(\rho_{k+j }+\rho_{k })(\rho_{k+j }-\rho_{k })] ,\\
G(t,x)=&- (u_{k+j }-u_{k }) \partial_{x}\rho_{k+1 }  -(b-1)(\rho_{k+j }-\rho_{k })\partial_xu_{k}  -(b-1)\rho_{k } \partial_x(u_{k+j }-u_{k }).
\end{array}
\right.
\end{equation*}

Applying Lemma 2.2 in \cite{ZMW}  again, similar to the proof of Lemma 2.1, yields for every $t\in [0,T)$
\begin{equation}
\begin{split}
 \| u_{k+j+1} -u_{k+1}  \|_{B^{s-1}_{p,q}}\leq &\exp\left(2C\int_0^t  \|  u_{k+j} (\tau)\| _{B^{s }_{p,q}}  d\tau\right) \| u_{k+j+1}(0) -u_{k+1} (0) \|_{B^{s-1}_{p,q}} \\
& +C\int_0^t\exp\left(2C\int_\tau^t  \|  u_{k+j} (\tau')\| _{B^{s }_{p,q}}  d\tau'\right)  \| F(t,\cdot) \|_{B^{s-2r-1}_{p,q}}  d\tau,
\end{split}
\end{equation}
and
\begin{equation}
\begin{split}
  \| \rho_{k+j+1} -\rho_{k+1}  \|_{B^{s-2r}_{p,q}}
&\leq \exp\left(2C\int_0^t  \|  u_{k+j} (\tau)\| _{B^{s  }_{p,q}}  d\tau\right)  \| \rho_{k+j+1}(0) -\rho_{k+1}(0)\|_{B^{s-2r}_{p,q}}  \\
& +C\int_0^t\exp\left(2C\int_\tau^t  \|  u_{k+j} (\tau')\| _{B^{s }_{p,q}}  d\tau'\right)   \| G(t,\cdot) \|_{B^{s-2r }_{p,q}} d\tau.
\end{split}
\end{equation}
For $s>\max\left\{2r +\frac{1}{p}, 2r+1-\frac{1}{p}\right\}$ and $s\neq 2r+2+\frac{1}{p},2r+1+\frac{1}{p}$, we arrive at
\begin{align*}
 \| F(t,\cdot) \|_{B^{s-2r-1}_{p,q}} +& \| G(t,\cdot) \|_{B^{s-2r }_{p,q}}\leq C\left(\| u_{k+j} -u_k  \|_{B^{s-1}_{p,q}}+ \| \rho_{k+j} -\rho_k  \|_{B^{s-2r}_{p,q}} \right)H(t),
\end{align*}
with
\begin{align*}
H(t)=& \| u_{k+1}  \|_{B^{s }_{p,q}}+\| u_{k+j}  \|_{B^{s }_{p,q}}+\| u_{k}  \|_{B^{s }_{p,q}}+\| \alpha \|_{B^{s-2r }_{p,q}}+\| \rho_{k+1}  \|_{B^{s-2r }_{p,q}}+\| \rho_{k+j}  \|_{B^{s-2r }_{p,q}}+\| \rho_{k }  \|_{B^{s-2r }_{p,q}}.
\end{align*}
Submitting above inequality to (2.8)-(2.9), we get
\begin{align*}
V_{k+1}^j(t)&\doteq\| u_{k+j+1} -u_{k+1}  \|_{B^{s-1}_{p,q}}+ \| \rho_{k+j+1} -\rho_{k+1}  \|_{B^{s-2r}_{p,q}} \\
&\leq \exp\left(2C\int_0^t  \|  u_{k+j} (\tau)\| _{B^{s  }_{p,q}}  d\tau\right)\left(\| u_{k+j+1}(0) -u_{k+1} (0) \|_{B^{s-1}_{p,q}}+ \|\rho_{k+j+1}(0) -\rho_{k+1}(0)  \|_{B^{s-2r}_{p,q}} \right.\\
&\quad\left.+C\int_0^t  \exp\left( 2C\int_\tau^t  \|  u_{k+j} (\tau')\| _{B^{s  }_{p,q}}  d\tau'\right)\left(\| F(\tau) \|_{B^{s-2r-1}_{p,q}} + \| G(\tau) \|_{B^{s-2r-1}_{p,q}}\right) d\tau\right)\\
&\leq \exp\left(2C\int_0^t  \|  u_{k+j} (\tau)\| _{B^{s  }_{p,q}}  d\tau\right)\left(V_{k+1}^j(0)   +C\int_0^t  \exp\left(-2C\int_0^\tau  \|  u_{k+j} (\tau')\| _{B^{s  }_{p,q}}  d\tau'\right)V_{k}^j(\tau)
H(\tau) d\tau\right).
\end{align*}
As per Proposition 2.1 in \cite{ZMW}, we have
\begin{equation*}
\begin{split}
\| u_{k+j+1}(0) -u_{k+1} (0) \|_{B^{s-1}_{p,q}}&=\| S_{k+j+1}u(0) -S_{k+1} u(0) \|_{B^{s-1}_{p,q}}=
\left\|\sum_{d=k+1}^{k+j}\Delta_du_0\right\|_{B_{p,q}^{s-1}}\\
&=\left(\sum_{k\geq -1}2^{k(s-1)\sigma}\left\|\Delta_k\left(\sum_{d=k+1}^{k+j}\Delta_du_0\right)\right\|_{L^p}^\sigma\right)^\frac{1}{\sigma}\\
&\leq C\left(\sum_{d=k}^{k+j+1}2^{-d\sigma}2^{d\sigma s}\left\|\Delta_d u_0 \right\|_{L^p}^\sigma\right)^\frac{1}{\sigma}\leq C2^{-k}||u_0||_{B_{p,q}^{s }}.
\end{split}
\end{equation*}
Similarly, we obtain
 $$\| \rho_{k+j+1}(0) -\rho_{k+1} (0) \|_{B^{s-2r}_{p,q}}\leq C2^{-k}||\rho_0||_{B_{p,q}^{s-2r +1}}.$$
Due to $\{u_k,\rho_k\}_{k\in\mathbb{N}}$ being uniformly bounded in $E_{p, r}^{s }(T) \times E_{p, r}^{s-2r+1 }(T)$, one may  find a positive constant $C_T$ independent of $k,j$ such that
$$V_{k+1}^j(t)\leq C_T\left(2^{-k}+\int_0^tV_{k}^j(\tau)d\tau\right), \ \forall t\in [0,T]. $$
Employing 
the induction procedure with respect to the index $k$ leads to
\begin{equation*}
\begin{split}
V_{k+1}^j(t)&\leq C_T\left(2^{-k}\sum_{i=0}^k\frac{(2TC_T)^i}{i!}+C_T^{k+1}\int\frac{(t-\tau)^k}{k!}d\tau\right)\leq 2^{-k}\left(C_T\sum_{i=0}^k\frac{(2TC_T)^i}{i!}\right)+C_T  \frac{(TC_T)^{k+1}}{(k+1)!},
\end{split}
\end{equation*}
which reveals the desired result as $k\rightarrow\infty$.
Finally, 
applying the interpolation method 
to the critical case  $s=2r+2+\frac{1}{p}$ concludes the proof of Lemma 2.2.
\end{proof}

Finally, we prove the existence and uniqueness for  Eq.(\ref{Eq.(1.1)}) in Besov space.
\begin{proof} [Proof of Theorem \ref{result1}]
In view of Lemma 2.2, we have that the pair $(u_k,\rho_k)_{k\in\mathbb{N}}$	is a Cauchy sequence in $\mathcal{C}([0,T];B_{p,q}^{s -1 })\times \mathcal{C}([0,T];B_{p,q}^{s -2r})$, thus, it converges to some limit function $(u ,\rho )\in \mathcal{C}([0,T];B_{p,q}^{s -1 })\times \mathcal{C}([0,T];B_{p,q}^{s -2r})$. Next, we show that $(u,\rho)\in E_{p,q}^{s}(T)\times E_{p,q}^{s-2r}(T)$ solves system (1.1).
According to Lemma 2.2, one can see that $(u_k,\rho_k)_{k\in\mathbb{N}}$ is uniformly bounded in $L^\infty([0,T];B_{p,q}^{s })\times L^\infty([0,T];B_{p,q}^{s-2r+1})$. Fatou property for Besov spaces (Proposition 2.2(6) in \cite{ZMW}) guarantees that $(u,\rho)$ also belong to $L^\infty([0,T];B_{p,q}^{s })\times L^\infty([0,T];B_{p,q}^{s-2r+1})$.

On the other hand, as  $(u_k,\rho_k)_{k\in\mathbb{N}}$ converges to $(u,\rho)$ in $\mathcal{C}([0,T];B_{p,q}^{s -1 })\times \mathcal{C}([0,T];B_{p,q}^{s -2r})$,
an interpolation argument insures that the convergence holds in $\mathcal{C}([0,T];B_{p,q}^{s'   })\times \mathcal{C}([0,T];B_{p,q}^{s' -2r+1})$, for any $s'<s$.

Taking limit in $T_k$ reveals
that $(u,\rho)$ satisfy system (1.1) in the sense of $\mathcal{C}([0,T];B_{p,q}^{s'-1 })\times \mathcal{C}([0,T];B_{p,q}^{s'-2r})$ for all $s'<s$.
In view of the fact that $(u,\rho)$ belongs to $L^\infty([0,T];B_{p,q}^{s })\times L^\infty([0,T];B_{p,q}^{s-2r+1})$ and $B_{p,q}^s$ is an algebra as $s>2r+\frac{1}{p}$, we get that the right-hand side of the equation
$$m_{t}+um_x=\alpha   u_x-bu_xm-\kappa \rho \rho_x $$
belongs to $L^\infty([0,T];B_{p,q}^{s })$, and the right-hand side of the second equation
$$\rho_t+u \rho_x=-(b-1)u_x\rho  $$
belongs to $L^\infty([0,T];B_{p,q}^{s-2r+1 })$. In particular, for the case $r<\infty$, Lemma 2.2 yields that $(u ,\rho )\in \mathcal{C}([0,T];B_{p,q}^{s' })\times \mathcal{C}([0,T];B_{p,q}^{s' -2r+1})$ for any $s'<s$. Finally, applying the equation again, we see that  $(\partial_tu ,\partial_t\rho )\in \mathcal{C}([0,T];B_{p,q}^{s-1 })\times \mathcal{C}([0,T];B_{p,q}^{s  -2r })$ if $r<\infty$, and in $L^\infty([0,T];B_{p,q}^{s-1 })\times L^\infty([0,T];B_{p,q}^{s  -2r })$ otherwise. Therefore, the pair  $(u,v)\in E_{p,q}^{s}(T)\times E_{p,q}^{s-2r+1}(T)$.

On the other hand, the continuity with respect to the initial data in
$$\mathcal{C}([0,T];B_{p,q}^{s'}) \cap \mathcal{C}^1([0,T];B_{p,q}^{s'-1})\times \mathcal{C}([0,T];B_{p,q}^{s'-2r+1}) \cap \mathcal{C}^1([0,T];B_{p,q}^{s'-2r})\quad \text {for all } s'<s,$$
can be obtained by Lemma 2.1 and a simple interpolation argument. The case $s'=s$ can be proved
through the use of a sequence of viscosity approximation solutions
$(u_\epsilon,v_\epsilon)_{\epsilon>0}$	   for System (1.1) which
converges uniformly in $$\mathcal{C}([0,T];B_{p,q}^{s }) \cap \mathcal{C}^1([0,T];B_{p,q}^{s -1})\times \mathcal{C}([0,T];B_{p,q}^{s -2r+1}) \cap \mathcal{C}^1([0,T];B_{p,q}^{s -2r})$$
gives the continuity of solution $(u,\rho)$ in $(u,v)\in E_{p,q}^{s}(T)\times E_{p,q}^{s-2r+1}(T)$.
\end{proof}

\subsection{Critical case}
It is well known that $B_{2,2}^s(\mathbb{R})=H^s$ and for any $s' <2r+\frac{1}{2}<s$:$$H^s\hookrightarrow B^{2r+\frac{1}{2}}_{2,1}\hookrightarrow
H^{2r+\frac{1}{2}}\hookrightarrow
B^{2r+\frac{1}{2}}_{2,\infty}\hookrightarrow H^{s'}, $$ which shows that $H^s$ and $B_{2,1}^s$ are quite close. So, attention is now restricted to the critical case in the local well-posedness.

\begin{thm}\label{result2}
Suppose that    the function $\alpha(x)\equiv c\in \mathbb{R}$ or $\alpha(\cdot)\in B_{2,1}^{1/2}$, and the initial data $z_0\doteq(u_0,\rho_0)\in B_{2,1}^{2r+\frac{1}{2}}\times B_{2,1}^{\frac{3}{2}}$.  Then there exists a time $T>0$ such that the Cauchy problem (1.1) has a unique solution $$z= z(\cdot,z_0)\in \mathcal{C}([0,T];B_{2,1}^{2r+\frac{1}{2}}) \cap \mathcal{C}^1([0,T];B_{2,1}^{2r })\times \mathcal{C}([0,T];B_{2,1}^{\frac{3}{2}}) \cap \mathcal{C}^1([0,T];B_{2,1}^{\frac{1}{2}}).$$
 Moreover, the solution depends continuously on the initial data, i.e. the mapping
$$z_0\mapsto z(\cdot,z_0):B_{2,1}^{2r+\frac{1}{2}} \times B_{2,1}^{\frac{3}{2}}\mapsto\mathcal{C}([0,T];B_{2,1}^{2r+\frac{1}{2}}) \cap \mathcal{C}^1([0,T];B_{2,1}^{2r })\times \mathcal{C}([0,T];B_{2,1}^{\frac{3}{2}}) \cap \mathcal{C}^1([0,T];B_{2,1}^{\frac{1}{2}})$$
 is continuous.
\end{thm}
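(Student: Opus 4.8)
The plan is to mirror the three-step scheme already carried out for Theorem \ref{result1} (Friedrichs approximation, uniform bounds, then Cauchy convergence through the Lipschitz estimate of Lemma 2.1), now at the endpoint regularity $s=2r+\frac12$ with third index $q=1$. The whole argument hinges on one structural fact: although $s=2r+\frac12$ is excluded from Theorem \ref{result1} (whose hypothesis demands the strict inequality $s>\max\{2r+\frac1p,2r+1-\frac1p\}=2r+\frac12$ when $p=2$), the borderline product and embedding laws survive when $q=1$. Concretely, $B_{2,1}^{1/2}\hookrightarrow L^\infty$ and $B_{2,1}^{1/2}$ is an algebra (the endpoint algebra $B_{p,1}^{1/p}$), and $B_{2,1}^{3/2}=B_{2,1}^{1/2+1}$ behaves as a $B_{2,1}^{1/2}$-module. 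Combined with the equivalence $\|u\|_{B_{2,1}^\sigma}\approx\|m\|_{B_{2,1}^{\sigma-2r}}$ from (2.3), these let every Moser-type estimate invoked in Lemmas 2.1 and 2.2 be reproduced at the critical index. Moreover, since $q=1<\infty$, the transport theory of \cite{B} yields genuine continuity in time at the top regularity, not merely the $L^\infty$ bound that plagues the $q=\infty$ case.

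First I would construct the Friedrichs iterates $(u_k,\rho_k)$ solving $(T_k)$ exactly as in Lemma 2.2 and replay the a priori estimate (2.6)--(2.7) at $s=2r+\frac12$: the product laws used there are all of the endpoint type just described, so the same choice of $T^*$ gives a uniform bound for $(u_k,\rho_k)$ in $E_{2,1}^{2r+\frac12}(T)\times E_{2,1}^{3/2}(T)$ on a common interval $[0,T]$. Next I would show $(u_k,\rho_k)$ is a Cauchy sequence in the one-notch-lower space $\mathcal C([0,T];B_{2,1}^{2r-\frac12})\times\mathcal C([0,T];B_{2,1}^{1/2})$ by running the difference estimate of Lemma 2.1(i) with $s=2r+\frac12$; here $s-2r=\frac12=\frac1p$ sits exactly on the algebra threshold, which is admissible precisely because $q=1$. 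The limit $(u,\rho)$ inherits the uniform top-regularity bound by the Fatou property (Proposition 2.2(6) of \cite{ZMW}), solves (1.1), and—using $q<\infty$ in the transport estimate and then the equation itself for the time derivatives—lies in $\mathcal C([0,T];B_{2,1}^{2r+\frac12})\cap\mathcal C^1([0,T];B_{2,1}^{2r})\times\mathcal C([0,T];B_{2,1}^{3/2})\cap\mathcal C^1([0,T];B_{2,1}^{1/2})$. Uniqueness follows at once from the same Lemma 2.1(i) estimate applied to two solutions.

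The hard part will be the continuity of the data-to-solution map into the top space $\mathcal C([0,T];B_{2,1}^{2r+\frac12})\times\mathcal C([0,T];B_{2,1}^{3/2})$. The obstruction is that the Lipschitz bound of Lemma 2.1 controls the difference of two solutions only in the lower norm $B_{2,1}^{2r-\frac12}\times B_{2,1}^{1/2}$, costing one full derivative; at the critical level there is no higher regularity available to interpolate against, so the naive interpolation used for $s'<s$ in Theorem \ref{result1} fails at $s'=s$. I would resolve this with a Bona--Smith type approximation: given $z_0^n\to z_0$ in $B_{2,1}^{2r+\frac12}\times B_{2,1}^{3/2}$, split each datum through the low-frequency cut-off $S_N$, propagate the smooth pieces using the already-established subcritical continuity while controlling them uniformly via the bounds of the construction step, and dominate the high-frequency remainder by $\|z_0-S_Nz_0\|_{B_{2,1}^{2r+\frac12}\times B_{2,1}^{3/2}}\to0$—a convergence that holds exactly because $q=1<\infty$. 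Optimising the splitting parameter $N$ against $n$ then forces convergence in the top norm. This is the delicate estimate, and the place where the endpoint index $q=1$ is indispensable.
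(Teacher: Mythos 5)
Your overall architecture (Friedrichs iterates, uniform critical bounds, convergence one notch lower, Bona--Smith for flow-map continuity) matches the template the paper itself points to --- its ``proof'' of Theorem 2.2 is only a two-line citation of the transport theory and of Theorem 1 in \cite{D23} --- but your middle step contains a genuine gap. You claim the Lipschitz estimate of Lemma 2.1(i) can be ``run'' at $s=2r+\tfrac12$, $q=1$, so that the iterates are Cauchy in $\mathcal C([0,T];B_{2,1}^{2r-\frac12})\times\mathcal C([0,T];B_{2,1}^{\frac12})$, asserting that the algebra threshold $s-2r=\tfrac1p$ is ``admissible precisely because $q=1$''. That is true for the \emph{solution-level} estimates (the upper endpoint $\sigma=1+\tfrac1p$ of the transport theory and the algebra $B_{2,1}^{1/2}$ do survive at $q=1$, which is why your existence and uniform-bound steps are sound), but it fails at the \emph{difference} level, where one derivative is lost and $m_{12}$ sits at regularity $-\tfrac12$. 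Two estimates break there. First, the Morse-type product bound used in Lemma 2.1 required $s_1+s_2=2s-4r-1>0$ strictly: at equality, the Bony remainder only satisfies $R\colon B_{2,1}^{1/2}\times B_{2,1}^{-1/2}\to B_{1,\infty}^{0}\hookrightarrow B_{2,\infty}^{-1/2}$, with the third index $\infty$ unavoidable (a sparse-block example shows $\sum_j\|\Delta_jR\|_{L^1}$ can diverge even with both factors in $q=1$ spaces), so terms like $u_{12}\,\partial_x m_2$ cannot be placed in $B_{2,1}^{-1/2}$. Second, the transport estimate for $m_{12}$ in $B_{2,1}^{-1/2}$ sits at the \emph{lower} endpoint $\sigma=-\min(\tfrac1p,\tfrac1{p'})$ of Theorem 3.14 in \cite{B}, which is admissible only with third index $\infty$, not $q=1$. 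This is exactly why Lemma 2.1 and Theorem 2.1 demand the strict inequality $s>2r+\tfrac12$, and why Danchin's note --- the proof the paper defers to --- carries out the stability estimates in $B_{2,\infty}^{1/2}$ and compensates with the logarithmic interpolation inequality $\|f\|_{B_{2,1}^{1/2}}\leq C\,\|f\|_{B_{2,\infty}^{1/2}}\log\left(e+\frac{\|f\|_{B_{2,\infty}^{3/2}}}{\|f\|_{B_{2,\infty}^{1/2}}}\right)$ together with Osgood's lemma.

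The repair does not disturb the rest of your outline: establish the difference estimate in $B_{2,\infty}^{2r-\frac12}\times B_{2,\infty}^{\frac12}$ (where the endpoint transport estimate and the third-index-losing product laws are valid, with coefficients controlled through the uniform $B_{2,1}^{2r+\frac12}\times B_{2,1}^{\frac32}$ bounds and the embedding $B_{2,\infty}^{2r-\frac12}\hookrightarrow B_{2,1}^{\frac12}$, using $2r-\tfrac12>\tfrac12$ for $r\geq1$); then interpolation against the uniform critical bounds gives Cauchy-ness of the iterates in every subcritical $B_{2,1}^{s'}\times B_{2,1}^{s'-2r+1}$, $s'<2r+\tfrac12$, which suffices to pass to the limit, while uniqueness and the stability input for your Bona--Smith argument come from the $B_{2,\infty}$ estimate via Osgood rather than from a Gronwall-type Lipschitz bound. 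With that substitution, your Bona--Smith step (small high-frequency tails uniformly in the data, valid because $q=1<\infty$) is the right mechanism for continuity of the flow map in the top norm and is consistent with the cited scheme; as written, however, the proposal's claim of a $q=1$ Lipschitz estimate at the critical index is false, and the Osgood/log-interpolation idea it omits is precisely the new ingredient that distinguishes the critical case from Theorem 2.1.
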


\begin{proof}
On account of $(u_0,\rho_0)\in B_{2,1}^{2r+\frac{1}{2}}\times B_{2,1}^{\frac{3}{2}}$, the transport theory (see Lemma 2.2 in \cite{ZMW}) can be applied. Similar to the
case  $(u_0,\rho_0)\in B_{p,q}^{s}\times B_{p,q}^{s-2r+1}$  with $s>\max\left\{2r +\frac{1}{p},2r +\frac{1}{2},2r+1-\frac{1}{p}\right\}$ and the proof of Theorem 1 in \cite{D23}, we can establish this result. The proof of the theorem is therefore
omitted without details.
\end{proof}

\section{Compact support}

Next, let us consider the following initial value problem:
\begin{equation}
\left\{
\begin{array}{llll}
\varphi_t= u (t,\varphi(t,x)), &t\in[0,T),x\in\mathbb{R},\\
\varphi(0,x)=x, &x\in\mathbb{R},
\end{array}
\right.
\end{equation}
where $u$ is the first component of the solution $(u,\rho)$ for the problem (1.1).
\begin{lm}(see \cite{ZhouMuZeng})
Let $(u_0,\rho_0)\in H^s\times H^{s-2r+1}$ with $s> 2r+1/2$, and $T>0$ be the life span of the solution to Eq.(1.1).
Then there exists a unique solution $ \varphi\in \mathcal{C}^1([0,T),\mathbb{R})$ to Eq. (3.1).
Moreover, the map $ \varphi(t,\cdot)$ is an increasing diffeomorphism  over $\mathbb{R}$, where
\begin{equation}
\varphi_x (t,x)=\exp\left\{\int_0^t  u _\varphi(s,\varphi(s,x))ds\right\}>0,
 \end{equation}
for all $(t,x)\in[0,T)\times\mathbb{R}$.
\end{lm}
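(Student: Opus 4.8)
The plan is to read (3.1) as a one-parameter family of ordinary differential equations, indexed by the initial position $x$, whose vector field is the first solution component $u$, and then to invoke classical Cauchy--Lipschitz theory; the whole argument hinges on the spatial regularity of $u$ delivered by the local well-posedness result. First I would record that, since $(u_0,\rho_0)\in H^s\times H^{s-2r+1}$ with $s>2r+\tfrac12\geq\tfrac52$ (recall $r\geq1$), Theorem \ref{result1} with $p=q=2$ (as restated in Remark 2.1) gives $u\in\mathcal{C}([0,T);H^s)\cap\mathcal{C}^1([0,T);H^{s-1})$. Because $s>\tfrac52$, the Sobolev embeddings $H^{s-1}\hookrightarrow L^\infty$ and $H^{s-2}\hookrightarrow L^\infty$ ensure that $u_x$ and $u_{xx}$ are bounded on $[0,T']\times\mathbb{R}$ for each $T'<T$; in particular $u(t,\cdot)$ is Lipschitz, with $u_x(t,\cdot)$ itself Lipschitz, uniformly for $t$ in any compact subinterval of $[0,T)$.

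With this regularity in hand I would apply the Picard--Lindel\"{o}f theorem to produce, for every $x\in\mathbb{R}$, a unique maximal solution $\varphi(\cdot,x)$ of (3.1); the uniform-in-$t$ Lipschitz bound on $u$ prevents finite-time blow-up of the characteristic and allows continuation over the entire interval $[0,T)$. The joint $\mathcal{C}^1$ regularity of $\varphi$ in $(t,x)$ then follows from the classical theorem on differentiable dependence of ODE solutions upon their initial datum $x$, the required $C^1$ smoothness of the vector field being precisely what the embeddings above supply.

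To obtain the Jacobian formula I would differentiate the identity $\varphi_t(t,x)=u(t,\varphi(t,x))$ in $x$ --- legitimate by the $\mathcal{C}^1$ regularity just established --- which yields the linear scalar equation
\[
\frac{d}{dt}\,\varphi_x(t,x)=u_x\bigl(t,\varphi(t,x)\bigr)\,\varphi_x(t,x),\qquad \varphi_x(0,x)=1 .
\]
Its unique solution is exactly the exponential displayed in (3.2); since an exponential is strictly positive, $\varphi_x(t,x)>0$ for all $(t,x)\in[0,T)\times\mathbb{R}$. Consequently $\varphi(t,\cdot)$ is strictly increasing and, by the inverse function theorem, a local $C^1$-diffeomorphism.

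Finally, to upgrade this to a global diffeomorphism of $\mathbb{R}$, I would use the backward flow: solving (3.1) backward in time from an arbitrary target point furnishes a genuine two-sided inverse of $\varphi(t,\cdot)$, which is $C^1$ by the same dependence theorem (equivalently, boundedness of $u$ forces $\varphi(t,x)\to\pm\infty$ as $x\to\pm\infty$, so $\varphi(t,\cdot)$ is a proper $C^1$ injection with non-vanishing derivative, hence surjective). This gives the asserted increasing diffeomorphism. I anticipate no serious obstacle, as the statement is classical; the only point requiring genuine care is checking that the solution from Section 2 furnishes enough spatial regularity of $u$ --- boundedness and Lipschitz continuity of $u_x$ --- to legitimize both the ODE theory and the differentiation producing the formula for $\varphi_x$.
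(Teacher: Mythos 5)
Your proposal is correct and takes essentially the same route as the paper's own source: the paper does not prove this lemma but cites \cite{ZhouMuZeng}, where the argument is exactly the classical one you give --- since $s>2r+\tfrac12\geq\tfrac52$, the well-posedness result and the Sobolev embedding make $u(t,\cdot)$ a bounded, spatially $C^1$ (hence Lipschitz) vector field, so Picard--Lindel\"{o}f with $C^1$ dependence on the datum yields the unique flow, and differentiating $\varphi_t=u(t,\varphi)$ in $x$ gives the linear equation $\frac{d}{dt}\varphi_x=u_x(t,\varphi)\varphi_x$, $\varphi_x(0,x)=1$, whose exponential solution is (3.2) and is strictly positive. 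Your concluding step (boundedness of $u$ gives $|\varphi(t,x)-x|\leq t\,\|u\|_{L^\infty}$, so $\varphi(t,\cdot)$ is a proper increasing $C^1$ injection, hence a diffeomorphism of $\mathbb{R}$) is likewise the standard way the cited reference obtains surjectivity, so there is no gap.
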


Our next result shows the component $\rho$ of the solution for system (1.1) retains the
property of being compactly supported as it evolves over time, totally independent
of the form of the initial data $u_0$ (and $m_0$). In the following we assume that $(u_0, \rho_0)\in H^s\times H^{s-2r+1}$ with $s> 2r+1/2$, and so $\rho$ is a classical solution of the system.
\begin{lm}
Let $(u_0,\rho_0)\in H^s\times H^{s-2r+1}$ with $s> 2r+1/2$, and  $T>0$ be the maximal existence time of the corresponding solution $ (u, \rho)$ to system (1.1).  Then, we have
\begin{equation}
 \rho(t,\varphi(t,x))\varphi_x^{b-1}=\rho(0,x).
\end{equation}

Moreover, if there exist $M_1>0$    such that
   $ (b-1)u_\varphi(t,\varphi)\geq -M_1$  for all $ (t,x)\in[0,T)\times\mathbb{R}$, then
\begin{equation}
||\rho(t,\cdot)||_{L^\infty}=||\rho(t,\varphi(t,\cdot))||_{L^\infty}\leq \exp\{ M_1T\}||\rho_0(\cdot)||_{L^\infty}, \quad \text{ for all } t\in[0,T)
\end{equation}

Furthermore,   if  $\int_\mathbb{R}|\rho_0(x)|^\frac{1}{b-1}dx$ converge with $b\neq1$, then
$$\int_\mathbb{R}|\rho(t,x)|^\frac{1}{b-1}dx=\int_\mathbb{R}|\rho_0(x)|^\frac{1}{b-1}dx \text{ for all } t\in[0,T),$$

\end{lm}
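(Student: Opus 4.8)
The plan is to exploit that $\rho$ obeys a pure transport equation and to ride along the characteristic flow $\varphi$ furnished by Lemma 3.1. First I would rewrite the second equation of (1.1) as $\rho_t+u\rho_x=-(b-1)u_x\rho$ and differentiate $\rho$ along $\varphi$. The chain rule together with $\varphi_t=u(t,\varphi)$ gives
\begin{equation*}
\frac{d}{dt}\rho(t,\varphi(t,x))=\bigl(\rho_t+u\rho_x\bigr)(t,\varphi)=-(b-1)u_x(t,\varphi)\,\rho(t,\varphi).
\end{equation*}
On the other hand, differentiating $\varphi_t=u(t,\varphi)$ in $x$ (equivalently, differentiating formula (3.2)) yields $\partial_t\varphi_x=u_x(t,\varphi)\,\varphi_x$. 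Combining these two identities I would compute
\begin{equation*}
\frac{d}{dt}\bigl[\rho(t,\varphi)\,\varphi_x^{\,b-1}\bigr]=-(b-1)u_x\rho\,\varphi_x^{\,b-1}+(b-1)\rho\,\varphi_x^{\,b-2}\,\partial_t\varphi_x=0,
\end{equation*}
so that $\rho(t,\varphi)\varphi_x^{b-1}$ is independent of $t$. Evaluating at $t=0$, where $\varphi(0,x)=x$ and $\varphi_x(0,x)=1$, delivers the conservation identity (3.3).

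For the $L^\infty$ bound (3.4) I would solve (3.3) explicitly, using (3.2), as $\rho(t,\varphi(t,x))=\rho_0(x)\exp\{-(b-1)\int_0^t u_x(s,\varphi(s,x))\,ds\}$. The hypothesis $(b-1)u_\varphi(t,\varphi)\geq-M_1$ bounds the exponent from above by $M_1t\leq M_1T$, whence $|\rho(t,\varphi(t,x))|\leq e^{M_1T}|\rho_0(x)|$ pointwise. Since $\varphi(t,\cdot)$ is an increasing diffeomorphism of $\mathbb{R}$ by Lemma 3.1, taking the supremum over $x$ transfers this pointwise estimate to the stated $L^\infty$ inequality.

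For the final conservation law I would perform the change of variables $x=\varphi(t,y)$, whose Jacobian is $dx=\varphi_x(t,y)\,dy$ with $\varphi_x>0$. Inserting (3.3) written as $\rho(t,\varphi)=\rho_0(y)\varphi_x^{-(b-1)}$ and using $\varphi_x>0$ to handle the fractional power gives $|\rho(t,\varphi)|^{1/(b-1)}=|\rho_0(y)|^{1/(b-1)}\varphi_x^{-1}$, so the Jacobian cancels exactly and $\int_\mathbb{R}|\rho(t,x)|^{1/(b-1)}\,dx=\int_\mathbb{R}|\rho_0(y)|^{1/(b-1)}\,dy$.

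Because each part reduces to elementary manipulations along the flow, I do not expect a serious analytic obstacle; the only points requiring care are the regularity needed to treat $\rho$ as a classical solution and to differentiate along characteristics, which is guaranteed by $s>2r+\frac12$ (so that $u,\rho\in\mathcal{C}^1$), and the bookkeeping of the possibly negative exponent $1/(b-1)$ in the last part, where the strict positivity $\varphi_x>0$ coming from (3.2) is precisely what legitimizes the fractional powers and the change of variables.
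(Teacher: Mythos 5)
Your proposal is correct and follows essentially the same route as the paper: differentiating $\rho(t,\varphi)\varphi_x^{b-1}$ along the flow using $\varphi_{xt}=u_\varphi(t,\varphi)\varphi_x$ to get the conservation identity (3.3), rewriting it via (3.2) as $\rho(t,\varphi)=\rho_0\exp\{(1-b)\int_0^t u_\varphi\,ds\}$ for the $L^\infty$ bound, and performing the change of variables $x=\varphi(t,y)$ with the positive Jacobian cancelling the fractional power for the integral conservation law. Your explicit attention to $\varphi_x>0$ when handling the exponent $1/(b-1)$ matches the (implicit) use in the paper, so no gap remains.
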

\begin{proof}
Noticing $\frac{d\varphi_x (t,x)}{dt}=\varphi_{xt} =   u _\varphi (t,\varphi(t,x))\varphi_x (t,x)$, differentiating the left-hand side of the  equation in (3.3) with respect to $t$, and using the second equation in (1.1), we obtain
\begin{equation*}
\begin{split}
\frac{d}{dt}\left(\rho(t,\varphi(t,x))\varphi_x^{b-1}\right)&= \rho_t(t,\varphi(t,x))\varphi_x^{b-1}+\rho_\varphi(t,\varphi(t,x))\varphi_t\varphi_x^{b-1}+(b-1)\rho(t,\varphi(t,x))\varphi_x^{b-2}\varphi_{xt}\\
&=\varphi_x^{b-1}\bigg(\rho_t(t,\varphi(t,x)) +\rho_\varphi(t,\varphi(t,x))u (t,\varphi(t,x))+(b-1)\rho(t,\varphi(t,x))u _\varphi (t,\varphi(t,x))\bigg)\\
&=0.
\end{split}
\end{equation*}
which means that $\rho(t,\varphi(t,x))\varphi_x^{b-1}$ is independent on the time $t$.
By (3.1), we know $\varphi  (0,x)=x$ and $\varphi_x (x,0)=1$, thus, Eq.(3.3) holds.

By Lemma 3.1,  in view of
Eq. (3.3), and $\varphi_x (0,x)=1$
we have
\begin{equation*}
\begin{split}
||\rho(t,\cdot)||_{L^\infty}&=||\rho(t,\varphi (t,\cdot))||_{L^\infty}=|| \varphi_x ^{1-b}\rho_0||_{L^\infty}\\
&=\left\|\exp\left\{ (1-b)\int_0^t u_\varphi(s,\varphi(s,x))ds\right\}\rho_0(\cdot)\right\|_{L^\infty}\\
&\leq \exp\{ M_1T\}||\rho_0(\cdot)||_{L^\infty} \quad\text{ for all } t\in[0,T),
\end{split}
\end{equation*}
and
 \begin{equation*}
\begin{split}
\int_\mathbb{R}|\rho_0(x)|^{\frac{1}{b-1}}dx&=\int_\mathbb{R}|\rho(t,\varphi (t,x))|^{\frac{1}{b-1}}\varphi_x (t,x)dx=\int_\mathbb{R}|\rho(t,\varphi (t,x))|^{\frac{1}{b-1}}d\varphi(t,x)\\
&=\int_\mathbb{R}|\rho(t,x)|^{\frac{1}{b-1}}dx \text{ for all } t\in[0,T),
\end{split}
\end{equation*}
which guarantee the lemma is true.
\end{proof}
\begin{cor}
If $\rho(t,x)$ is a solution for (1.1) with initial data  $\rho_0=\rho(0,x)$ then we have
the following representation,
\begin{equation}
 \rho(t,x) =\rho_0(\varphi^{-1}(t,x))e^{ (1-b)\int_0^tu_x(s,x)ds}, \,\ t\in[0,T).
\end{equation}
Moreover, if $\rho_0=\rho(0,x)$ is supported in the interval $[\beta_{\rho_0},\gamma_{\rho_0}]$, then $\rho$ will remain
compactly supported for all further times of its existence $t\in[0,T)$, with the support
of $\rho(t,x)$ contained in the interval $[\varphi(t,\beta_{\rho_0}),\varphi(t,\gamma_{\rho_0})]$.

\end{cor}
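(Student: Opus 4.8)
The plan is to read off the representation formula (3.5) directly from the transport identity (3.3) of Lemma 3.2, and then to deduce the propagation of compact support from the strict positivity of the flow derivative together with the monotonicity of $\varphi(t,\cdot)$ supplied by Lemma 3.1.

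First I would rewrite (3.3) as $\rho(t,\varphi(t,x))=\rho_0(x)\,\varphi_x^{1-b}(t,x)$; this is legitimate because Lemma 3.1 guarantees $\varphi_x(t,x)>0$, so the power $\varphi_x^{1-b}$ is well defined and strictly positive for every $b\in\mathbb{R}$. Substituting the explicit expression (3.2) for $\varphi_x$ then gives
\begin{equation*}
\rho(t,\varphi(t,x))=\rho_0(x)\exp\left\{(1-b)\int_0^t u_x(s,\varphi(s,x))\,ds\right\}.
\end{equation*}
Since $\varphi(t,\cdot)$ is an increasing diffeomorphism of $\mathbb{R}$, I would invert the change of variables by setting $y=\varphi(t,x)$, i.e. $x=\varphi^{-1}(t,y)$; after relabelling $y$ as $x$ this is exactly (3.5), the exponent being understood as $u_x$ evaluated along the characteristic through the point $(t,x)$.

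For the support statement, I would exploit that the exponential factor in (3.5) is everywhere strictly positive. Hence $\rho(t,x)\neq 0$ forces $\rho_0(\varphi^{-1}(t,x))\neq 0$, that is $\varphi^{-1}(t,x)\in\operatorname{supp}\rho_0\subseteq[\beta_{\rho_0},\gamma_{\rho_0}]$. Applying the increasing map $\varphi(t,\cdot)$ to the inequalities $\beta_{\rho_0}\leq\varphi^{-1}(t,x)\leq\gamma_{\rho_0}$ transports them to $\varphi(t,\beta_{\rho_0})\leq x\leq\varphi(t,\gamma_{\rho_0})$, which yields $\operatorname{supp}\rho(t,\cdot)\subseteq[\varphi(t,\beta_{\rho_0}),\varphi(t,\gamma_{\rho_0})]$ for every $t\in[0,T)$.

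I do not expect a genuine obstacle, since the essential conservation identity has already been established in Lemma 3.2. The only points demanding care are the bookkeeping of the substitution $x\mapsto\varphi(t,x)$ and the interpretation of the exponent in (3.5); the decisive structural fact is that positivity of $\varphi_x$ makes $\varphi_x^{1-b}$ nowhere vanishing, which is precisely what prevents the support of $\rho$ from escaping the image under the flow of the initial support $[\beta_{\rho_0},\gamma_{\rho_0}]$.
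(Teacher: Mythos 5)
Your proof is correct and takes essentially the same route as the paper, which likewise derives (3.5) from the identities (3.3) and (3.2) together with the invertibility of $\varphi(t,\cdot)$, and then uses the strict positivity of $\varphi_x$ (equivalently, of the exponential factor) to confine $\operatorname{supp}\rho(t,\cdot)$ to $[\varphi(t,\beta_{\rho_0}),\varphi(t,\gamma_{\rho_0})]$. Your explicit remark that the exponent in (3.5) must be read as $u_x$ evaluated along the characteristic through $(t,x)$ is a welcome clarification of the paper's slightly loose notation $u_x(s,x)$.
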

\begin{proof}
The relation (3.5) follows from the identities (3.3) and (3.2) and the invertibility of
$\varphi$ for each $t\in[0,T)$. Furthermore, since $\varphi_x > 0$, and $\rho_0$ is compactly supported, it follows from this relation
that $\rho(t,x)$ is compactly supported for all times $t\in[0,T)$, with support contained
in $[\varphi(t,\beta_{\rho_0}),\varphi(t,\gamma_{\rho_0})]$.
\end{proof}
The next result proves a similar property for solutions $m$ of (1.1), namely that
a solution which is initially compactly supported retains this property throughout
its evolution. However, whereas Corollary 3.1 made no assumptions on the form
of the initial data $u_0,m_0$ aside from the required level of smoothness, Corollary 3.2
below requires that $\rho_0$   be also compactly supported.
\begin{cor}
  Assume that $u_0$ is such that $m=Au_0$ has compact support,
contained in the interval $[\beta_{m_0},\gamma_{m_0}]$, and that $\rho_0$ is also compactly supported,
with support contained in $[\beta_{\rho_0},\gamma_{\rho_0}]$. If $T=T(u_0,\rho_0)>0$ is the maximal existence
time of the unique classical solutions $u(t,x), \rho(t,x)$ to the system of equations (1.1) with the given initial data $(u_0(x) ,\rho_0(x))$ and $\alpha\equiv0$, then for any $t\in[0,T)$ the $C^1$
function $x\mapsto m(t,x)$ has compact support.
\end{cor}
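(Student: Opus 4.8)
The plan is to transport the first equation of (1.1) along the flow $\varphi$ of Lemma 3.1 and to mimic the computation that produced the conservation law (3.3) for $\rho$. With $\alpha\equiv0$ the equation for $m$ reads $m_t+um_x=-bu_xm-\kappa\rho\rho_x$, so along a characteristic the map $t\mapsto m(t,\varphi(t,x))$ satisfies $\frac{d}{dt}m(t,\varphi)=-bu_\varphi(t,\varphi)\,m(t,\varphi)-\kappa\rho(t,\varphi)\rho_\varphi(t,\varphi)$. The homogeneous term $-bu_\varphi m$ is the obstruction to a clean conservation law, because $u_x$ is not compactly supported even though $m$ is, so first I would remove it by the integrating factor $\varphi_x^{b}$, exactly as the factor $\varphi_x^{b-1}$ was used for $\rho$.

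Concretely, set $N(t,x):=m(t,\varphi(t,x))\,\varphi_x^{b}(t,x)$. Differentiating and using $\varphi_t=u(t,\varphi)$ together with $\varphi_{xt}=u_\varphi(t,\varphi)\varphi_x$ (which underlies (3.2)), the two contributions proportional to $bu_\varphi m\varphi_x^{b}$ cancel and one is left with
\[
\frac{d}{dt}N(t,x)=-\kappa\,\rho(t,\varphi(t,x))\,\rho_\varphi(t,\varphi(t,x))\,\varphi_x^{b}(t,x).
\]
This is the key identity; the point is that its right-hand side, read in the Lagrangian label $x$, is supported only where $\rho$ itself is nonzero. Indeed, by Corollary 3.1 the hypothesis that $\rho_0$ is supported in $[\beta_{\rho_0},\gamma_{\rho_0}]$ forces $\rho(t,\cdot)$ to be supported in $[\varphi(t,\beta_{\rho_0}),\varphi(t,\gamma_{\rho_0})]$, i.e. $\rho(t,\varphi(t,x))=0$ whenever $x\notin[\beta_{\rho_0},\gamma_{\rho_0}]$ (here it is essential that $\varphi(t,\cdot)$ is an increasing diffeomorphism). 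Since $\rho\,\rho_\varphi$ vanishes wherever $\rho$ does, I obtain $\frac{d}{dt}N(t,x)=0$ for every $x\notin[\beta_{\rho_0},\gamma_{\rho_0}]$; integrating in time and using $\varphi_x(0,x)=1$, $N(0,x)=m_0(x)$, this gives $m(t,\varphi(t,x))=m_0(x)\,\varphi_x^{-b}(t,x)$ for all such $x$.

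Finally I would combine the two support facts. Put $\beta:=\min\{\beta_{m_0},\beta_{\rho_0}\}$ and $\gamma:=\max\{\gamma_{m_0},\gamma_{\rho_0}\}$. For $x\notin[\beta,\gamma]$ the previous step yields $m(t,\varphi(t,x))=m_0(x)\,\varphi_x^{-b}$, while $m_0(x)=0$ because $x\notin[\beta_{m_0},\gamma_{m_0}]$; hence $m(t,\varphi(t,x))=0$. Pushing forward by the increasing diffeomorphism $\varphi(t,\cdot)$, this says exactly that $x\mapsto m(t,x)$ vanishes outside $[\varphi(t,\beta),\varphi(t,\gamma)]$, which is a bounded interval for each fixed $t\in[0,T)$, so $m(t,\cdot)$ has compact support.

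The only genuine difficulty is the forcing term $-\kappa\rho\rho_x$: when $\kappa\neq0$ it does not vanish, but it is compactly supported, and the integrating factor $\varphi_x^{b}$ is precisely what eliminates the non-compactly-supported $-bu_xm$ contribution, so that the evolution of $N$ is driven only by this compactly supported source. One could alternatively avoid the integrating factor and argue directly: for $x\notin[\beta,\gamma]$ the quantity $m(t,\varphi)$ solves the homogeneous linear ODE $\frac{d}{dt}m(t,\varphi)=-bu_\varphi(t,\varphi)\,m(t,\varphi)$ with zero initial datum, hence stays identically zero by uniqueness, giving the same conclusion.
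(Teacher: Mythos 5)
Your proposal is correct and follows essentially the same route as the paper: the integrating factor $\varphi_x^{b}$ along the flow of (3.1), the identity $\frac{d}{dt}\bigl(m(t,\varphi)\varphi_x^{b}\bigr)=-\kappa\,\rho(t,\varphi)\rho_\varphi(t,\varphi)\varphi_x^{b}$ (the paper's (3.6) in integrated form), and Corollary 3.1 to localize the source term in the Lagrangian label. If anything, your version is slightly cleaner: your choice $\beta=\min\{\beta_{m_0},\beta_{\rho_0}\}$, $\gamma=\max\{\gamma_{m_0},\gamma_{\rho_0}\}$ correctly takes the union of the two supports, where the paper's $\max/\min$ appears to be a typo.
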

\begin{proof}
Applying the family of diffeomorphisms (3.1) and the first equation in (1.1), we get
\begin{equation*}
\begin{split}
\frac{d}{dt}& \left(m(t,\varphi(t,x))\varphi_x^b(t,x) \right)\\
&=m_t(t,\varphi(t,x))\varphi_x^{b }+m_\varphi(t,\varphi(t,x))\varphi_t\varphi_x^{b }
+ b  m(t,\varphi(t,x))\varphi_x^{b-1}\varphi_{xt}  \\
&=\varphi_x^{b }\bigg(m_t(t,\varphi(t,x)) +m_\varphi(t,\varphi(t,x))\varphi_t+bm(t,\varphi(t,x))u _\varphi (t,\varphi(t,x)) \bigg)\\
&=-\kappa\varphi_x^{b }(t,x) \rho(t,\varphi(t,x))\rho_\varphi(t,\varphi(t,x)).
\end{split}
\end{equation*}
Therefore
\begin{equation}
\begin{split}
m(t,\varphi(t,x))\varphi_x^b(t,x) =m_0(x)  -\kappa\int_0^t\rho(s,\varphi(t,x))\rho_\varphi(s,\varphi(t,x))\varphi_x^{b }(s,x) ds
\end{split}
\end{equation}
Now, by assumption, $m_0(x)$ is supported in the compact interval   $[\beta_{m_0},\gamma_{m_0}]$. For
fixed $t\in[0,T)$, Corollary 3.1 assures us that $\rho(t,\cdot)$ is compactly supported in the
interval $[\beta_{\rho_0},\gamma_{\rho_0}]$, and therefore the integral term in (3.6), regarded as a
function of $x$, must also be compactly supported in the interval  $[\beta_{\rho_0},\gamma_{\rho_0}]$. Therefore,
setting $\alpha=\max\{\alpha_{m_0},\alpha_{\rho_0}\},\gamma=\min\{\gamma_{m_0},\gamma_{\rho_0}\},$
it follows that $m(t,\cdot)$ is compactly supported, with its support contained in the
interval  $[\varphi(t,\beta),\varphi(t,\gamma)]$, for all $t\in[0,T)$.
\end{proof}

  \section{On the Persistence Properties of the  Eq.(1.3)  }

 In   Section 3, we prove  that solutions of (1.3) have infinite propagation speed in the sense that (non-zero)
compactly supported initial data lead to solutions that are not compactly supported for any positive time. In \cite{GHY}, the authors shown that the
solutions possess exponentially decaying profiles for large values of the spatial variable.
Motivated by the recent works \cite{Br,Zhou} on the nonlinear   Camassa-Holm equation in weighted Sobolev spaces. The other aim of this paper is to establish the  persistence properties for the Eq.(1.3)   in weighted $L^p$ spaces.  Our results  generalize the
work presented in  \cite{GHY} on infinite propagation speed and asymptotic profiles to Eq.(1.3) in $L^p_{ \phi}$.

  In  the present section, we intend to find a large class of weight functions $\phi$ such that
\begin{equation*}
\sup_{t\in[0,T)}\left(||u(t)\phi||_{L^p} +||  u_x(t) \phi||_{L^p}+||\rho(t)\phi||_{L^p}\right)<\infty,
\end{equation*}
 this way we obtain a persistence result on solutions $(u,\rho)$ to Eq. (1.3) in the weight $L^p$ spaces   $L^p_{ \phi}:=L^p(\mathbb{R},\phi^p(x)dx)$. As a consequence and an application we determine the spatial asymptotic behavior of certain solutions
to Eq. (1.3).   We will work with moderate weight functions which
appear with regularity in the theory of time-frequency analysis  and have led
to optimal results for the  Camassa-Holm equation in \cite{Br}, we first give the definition for admissible   weight  function. The pre-defined terminologies like $v$-moderate,
sub-multiplicative  can be refer the reader to \cite{Br,Zhou}).

\begin{df}
An admissible weight function for the Eq. (1.3) is a
locally absolutely continuous function $\phi:\mathbb{R}\rightarrow\mathbb{R}$ such that, for some $A>0$ and  almost all  $x\in\mathbb{R}$,
$|\phi'(x)|\leq A|\phi(x)|$, and that is $v$-moderate for some sub-multiplicative weight function $v$
satisfying $\inf_\mathbb{R}v>0$ and
\begin{equation}
\int_\mathbb{R}\frac{v(x)}{e^{|x|}}dx <\infty.
\end{equation}
\end{df}
We can now state our main result on admissible weights.
\begin{thm}\label{result5}
Assume that
$u_0\phi,  u_{0,x}\phi,  \rho_0\phi\in   L^p(\mathbb{R}) $, $1\leq p\leq\infty$ for
 an admissible weight function  $\phi$ of Eq. (1.3),  and $T$ be the maximal time of the solution $z=(u,\rho) $ to system (1.3) with the initial data $z_0=(u_0,\rho_0)\in H^s(\mathbb{R})\times H^{s-1}(\mathbb{R}) $, $s>3/2$.
Then, for all
$t\in [0, T]$, there is a constant $C >0$ depending only on the weight $ \phi$ such that
$$||u(t)\phi||_{L^p} +||u_x(t) \phi||_{L^p}+||\rho(t)\phi||_{L^p}\leq(||u_0\phi||_{L^p} +||u_{0,x}\phi||_{L^p}+||\rho_0\phi||_{L^p})\exp\left\{C(1+M)t\right\}$$
where
$$M\doteq\sup_{t\in[0,T]}\left(||u(t)||_ {L^\infty}+||u_x(t)|| _{L^\infty}+||\rho(t)||_{L^\infty}\right)<\infty.$$
\end{thm}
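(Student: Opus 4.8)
The plan is to rewrite (1.3) (the case $r=1$, so $m=u-u_{xx}$) as transport equations with nonlocal right-hand sides and then to run a weighted $L^p$ energy estimate in the spirit of Brandolese's treatment of the Camassa--Holm equation. Let $G(x)=\tfrac12 e^{-|x|}$ be the kernel of $(1-\partial_x^2)^{-1}$ and set
\begin{equation*}
F:=\alpha u-\tfrac{b}{2}u^2+\tfrac{b-3}{2}u_x^2-\tfrac{\kappa}{2}\rho^2 .
\end{equation*}
Applying $(1-\partial_x^2)^{-1}$ to the first equation of (1.3) and simplifying gives $u_t+uu_x=G'\ast F=:P$, where $G'=\partial_x G$. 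Differentiating in $x$ and using $\partial_x^2 G=G-\delta$ yields $u_{xt}+uu_{xx}=G\ast F-F-u_x^2=:Q$, whose local part $-F-u_x^2$ is a combination of $u,u^2,u_x^2,\rho^2$. The third equation is already of transport type, $\rho_t+u\rho_x=-(b-1)u_x\rho$. Hence each of $u,u_x,\rho$ solves an equation $f_t+uf_x=g$ with $g$ equal to $P$, $Q$ and $-(b-1)u_x\rho$, respectively, and every nonlocal contribution is a convolution of $G$ or $G'$ with one of the quadratic quantities $u^2,u_x^2,\rho^2$ (or the linear $u$).

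Next I would set up the basic energy identity for a bounded weight. Multiplying $f_t+uf_x=g$ by $\phi$ and then by $p|\phi f|^{p-2}(\phi f)$, integrating in $x$, and using $\int_\mathbb{R} u\,\partial_x|\phi f|^p\,dx=-\int_\mathbb{R} u_x|\phi f|^p\,dx$ together with $|\phi'|\le A\phi$, one obtains
\begin{equation*}
\frac{d}{dt}\|\phi f\|_{L^p}\le C(1+M)\|\phi f\|_{L^p}+\|\phi g\|_{L^p},
\end{equation*}
with $M$ as in the statement. The crux is bounding $\|\phi g\|_{L^p}$. The pointwise quadratic terms are immediate, e.g.\ $\|\phi u^2\|_{L^p}\le\|u\|_{L^\infty}\|\phi u\|_{L^p}\le M\|\phi u\|_{L^p}$, and likewise for $u_x^2,\rho^2$. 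For the nonlocal terms I would prove the weighted convolution estimate
\begin{equation*}
\|\phi(G\ast h)\|_{L^p}\le C_0\Big(\int_\mathbb{R} e^{-|y|}v(y)\,dy\Big)\|\phi h\|_{L^p},
\end{equation*}
obtained by writing $\phi(x)/\phi(y)\le C_0 v(x-y)$ from $v$-moderateness and applying Young's inequality; the integral is finite exactly by the admissibility condition (4.1), and the same bound holds with $G$ replaced by $G'$ since $|G'|=G$ a.e. Combining, $\|\phi P\|_{L^p}$ and $\|\phi Q\|_{L^p}$ are both $\le C(1+M)\big(\|\phi u\|_{L^p}+\|\phi u_x\|_{L^p}+\|\phi\rho\|_{L^p}\big)$, while $\|\phi\,(b-1)u_x\rho\|_{L^p}\le CM\|\phi\rho\|_{L^p}$.

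Summing the three inequalities for $Y(t):=\|\phi u\|_{L^p}+\|\phi u_x\|_{L^p}+\|\phi\rho\|_{L^p}$ gives $\frac{d}{dt}Y\le C(1+M)Y$, and Gronwall's lemma then produces $Y(t)\le Y(0)\,e^{C(1+M)t}$, which is exactly the asserted bound. The main obstacle is not the algebra but making this rigorous, since a priori the weighted norms could be infinite and $\phi$ is unbounded. I would therefore first carry out the entire computation with the truncated weights $\phi_N:=\min\{\phi,N\}$: these are bounded, still satisfy $|\phi_N'|\le A\phi_N$, and remain $v$-moderate with a constant independent of $N$ (here one uses $\inf_\mathbb{R} v>0$). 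The estimate for $Y_N$ then holds with a constant independent of $N$, and since $\phi_N\uparrow\phi$ and $Y_N(0)\le Y(0)<\infty$, letting $N\to\infty$ by monotone convergence and Fatou yields the result. The delicate point inside this scheme is verifying that $Y_N(t)$ is finite for each fixed $N$ and differentiable in $t$, which one checks from $(u,\rho)\in \mathcal{C}([0,T];H^s\times H^{s-1})$ with $s>3/2$. Finally the endpoint $p=\infty$ is obtained either by letting $p\to\infty$ in the estimate or by repeating the $L^\infty$ computation along the flow $\varphi$ of Lemma 3.1.
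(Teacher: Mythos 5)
Your proposal matches the paper's proof essentially step for step: the same nonlocal reformulation via the kernel $G(x)=\frac12 e^{-|x|}$ with $P=\frac b2 u^2+\frac{3-b}{2}u_x^2+\frac{\kappa}{2}\rho^2-\alpha u$ (your $F=-P$), the same weighted $L^p$ energy estimates with the integration-by-parts trick for the terms carrying the highest derivative, the same weighted convolution bound (Propositions 3.1--3.2 in Brandolese) hinging on $e^{-|\cdot|}v\in L^1$ from condition (4.1) and on $\partial_x^2G=G-\delta$, the same truncation $f_N=\min\{\phi,N\}$ with $v$-moderateness uniform in $N$ (via $\inf v>0$), Gronwall, then $N\to\infty$, and finally $p\to\infty$ via $\|\cdot\|_{L^\infty}=\lim_{p\to\infty}\|\cdot\|_{L^p}$. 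The only differences are cosmetic---you run one generic transport identity $f_t+uf_x=g$ for all three components, where the paper handles $u$, $u_x$, $\rho$ separately and bounds the $uu_xf$ term in the $u$-equation directly by H\"older---so the argument is correct and is essentially the paper's.
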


If we take the standard weights $\phi=\phi_{a,b,c,d}(x)=e^{a|x|^b}(1+|x|)^c\log (e+|x|)^d$   with the following conditions:
$$a\geq0,\,\ c,d\in\mathbb{R},\,\ 0\leq b\leq1,\,\ ab<1,$$
then, the restriction $ab<1$ guarantees the validity of condition (4.1) for a multiplicative function $v(x)\geq1$.
Thus, the classical example of the application for  Theorem 4.1  is the following special persistence properties.

\begin{rem}

(1) Take $\phi=\phi_{0,0,c,0}$  with $c>0$, and choose $p=\infty$. In this case the Theorem 4.1
states that the condition
$$|u_0(x)|+|u_{0,x}(x)|+|\rho_0(x)|\leq C(1+|x|)^{-c}$$
implies the uniform algebraic decay in $[0, T)$:
$$|u (x,t)|+|u_x(x,t)|+|\rho(x,t)|\leq C'(1+|x|)^{-c}.$$
 Thus, we obtain the algebraic decay rates of
strong solutions to the Eq. (1.3).   A corresponding result on algebraic decay rates of strong solutions for the CH equation   can be found in \cite{NZ}.

(2) Choose $\phi=\phi_{a,1,0,0}$  if $x\geq0$, and $\phi(x)=1$ if $x\leq0$ with $0\leq a<1$. It is easy to see that such weight
  satisfies the admissibility conditions of Definition 4.1. Let further $p=\infty$ in
 Theorem 4.1, Then one deduces that Eq. (1.3) preserve  the pointwise decay $O(e^{-ax})$ as
$x\rightarrow+\infty$ for any $t>0$. Similarly, we have persistence
of the decay $O(e^{-ax})$ as $x\rightarrow-\infty$. Theorem 4.1 thus generalizes the main result of \cite{GHY,HMPZ} on persistence of strong solutions to the CH type equations. It is worthwhile to note that Yin et al. also use weight functions and Gronwall's Lemma to obtain the exponential decay [\cite{GHY}, Theorem 5.1], in which they require the Sobolev index $s>5/2$, here we extend to $s>3/2$.

(3)  Our result  generalize the work of \cite{Br} on persistence and non-persistence of solutions to its supersymmetric extension. Meanwhile, since the  dispersive term present in Eq.(1.3), we should improve the corresponding result [\cite{Br}, Theorem 2.2] to $p\in[1,\infty]$.
\end{rem}
Clearly, the limit case $\phi=\phi_{1,1,c,d}$ is not covered by Theorem 4.1.  In the following theorem however we may choose the weight $\phi=\phi_{1,1,c,d}$ with $c<0,d\in\mathbb{R}$,  and $\frac{1}{|c|}<p\leq\infty$, or more generally
when $ (1+|\cdot|)^c\log (e+|\cdot|)^d\in L^p(\mathbb{R})$. See   Corollary 4.1 below:

\begin{cor}\label{result6}
 Let   $1\leq p \leq \infty$  and $\phi$ be a $v$-moderate weight function as in Definition 4.1
satisfying
\begin{equation}
v e^{-|\cdot|}\in L^p (\mathbb{R}).
\end{equation}
Assume that
$ u_0\phi,u_{0,x}\phi,\rho_0\phi\in L^p(\mathbb{R})$ and
$ u_0\phi^{1/2},u_{0,x}\phi^{1/2},\rho_0\phi^{1/2}\in L^2(\mathbb{R})$.
Let also $z=(u,\rho)\in \mathcal{C}([0,T),H^s(\mathbb{R}))\times \mathcal{C}([0,T),H^{s-1}(\mathbb{R})), s>3/2$  be the strong solution of the Cauchy problem for
Equation (1.1)  emanating from $z_0=(u_0,\rho_0)$. Then,
$$\sup_{t\in[0,T)}\left(||u(t)\phi||_{L^p}+||u_x(t)\phi||_{L^p}+||\rho(t) \phi||_{L^p}\right)$$
and
$$\sup_{t\in[0,T)}\left(||u (t)\phi^\frac{1}{2}||_{L^{2}}+||u_x(t) \phi^\frac{1}{2}||_{L^{2}}+||\rho(t) \phi^\frac{1}{2}||_{L^{2}}\right)$$
are finite.
\end{cor}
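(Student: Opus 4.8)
The plan is to reduce everything to Theorem 4.1 by a two-step bootstrap, circumventing the fact that $\phi$ itself need not be admissible in the sense of Definition 4.1: in the limiting regime the quantity $ve^{-|\cdot|}$ lies only in $L^p$ and not in $L^1$, so the integrability condition (4.1) may fail. The half-weight $\phi^{1/2}$, however, will be genuinely admissible, which gives the $L^2$ conclusion for free from Theorem 4.1; the resulting $L^2$ bounds are then the mechanism that controls the nonlocal terms in the $L^p$ estimate, which explains why the corollary carries hypotheses both on $\phi$ and on $\phi^{1/2}$.

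\textbf{Step 1 (the $L^2$ statement).} First I would verify that $\phi^{1/2}$ is an admissible weight with moderating function $v^{1/2}$. Local absolute continuity is immediate, $|(\phi^{1/2})'|=\tfrac12\phi^{-1/2}|\phi'|\le\tfrac A2\phi^{1/2}$, the function $v^{1/2}$ is sub-multiplicative with $\inf_{\mathbb R}v^{1/2}=(\inf_{\mathbb R}v)^{1/2}>0$, and $\phi^{1/2}$ is $v^{1/2}$-moderate. The only genuine point is the integrability (4.1) for $v^{1/2}$: writing $v^{1/2}e^{-|x|}=(ve^{-|x|})^{1/2}e^{-|x|/2}$ and applying H\"older with exponents $(2p,(2p)')$ gives
\[
\int_{\mathbb R}v^{1/2}(x)e^{-|x|}\,dx\le\|ve^{-|\cdot|}\|_{L^p}^{1/2}\,\|e^{-|\cdot|/2}\|_{L^{(2p)'}}<\infty,
\]
which is finite exactly by hypothesis (4.2). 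Thus Theorem 4.1, applied with $p=2$ and weight $\phi^{1/2}$ to the data $u_0\phi^{1/2},u_{0,x}\phi^{1/2},\rho_0\phi^{1/2}\in L^2$, yields $\sup_{t\in[0,T)}\big(\|u\phi^{1/2}\|_{L^2}+\|u_x\phi^{1/2}\|_{L^2}+\|\rho\phi^{1/2}\|_{L^2}\big)<\infty$, which is the second assertion.

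\textbf{Step 2 (the $L^p$ statement).} I would then reproduce the weighted energy estimate of Theorem 4.1 on the nonlocal form of (1.3),
\[
u_t+uu_x=\partial_x G*\Big(\alpha u-\tfrac b2u^2+\tfrac{b-3}2u_x^2-\tfrac\kappa2\rho^2\Big),\qquad \rho_t+u\rho_x=-(b-1)u_x\rho,
\]
where $G(x)=\tfrac12e^{-|x|}$ is the kernel of $(1-\partial_x^2)^{-1}$, after multiplying by $\phi$ (and by $\phi$ the once-differentiated $u$-equation); to legitimize the computation when $\phi$ is unbounded I would work with the truncations $\phi_N=\min(\phi,N)$, derive $N$-uniform bounds, and let $N\to\infty$. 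All purely local contributions — the term $u^2\phi_x$ (bounded by $AM\|\phi u\|_{L^p}$ since $|\phi'|\le A\phi$), the transport term $(b-1)u_x\rho$, and the local quadratics $u^2,u_x^2,\rho^2$ arising from $\partial_x^2G*(\cdot)=G*(\cdot)-(\cdot)$ in the $u_x$-equation — are controlled by $M$ times the weighted $L^p$ norms by extracting one factor in $L^\infty$, and feed into Gronwall. The new ingredient is the nonlocal quadratic terms: by $\phi$-moderateness,
\[
\phi(x)\,\big|\partial_x G*(\rho^2)(x)\big|\le\tfrac12\big((ve^{-|\cdot|})*(\phi\rho^2)\big)(x),
\]
and since $\phi\rho^2=(\phi^{1/2}\rho)^2\in L^1$ with $\|\phi\rho^2\|_{L^1}=\|\phi^{1/2}\rho\|_{L^2}^2$ finite by Step 1, Young's inequality $L^p*L^1\hookrightarrow L^p$ gives $\|\phi\,\partial_x G*(\rho^2)\|_{L^p}\le\tfrac12\|ve^{-|\cdot|}\|_{L^p}\|\phi^{1/2}\rho\|_{L^2}^2<\infty$ by (4.2); the terms $\partial_x G*(u^2)$ and $\partial_x G*(u_x^2)$ are identical with $\phi u^2=(\phi^{1/2}u)^2$ and $\phi u_x^2=(\phi^{1/2}u_x)^2$, and the linear term $\alpha\partial_x G*u$ (when $\alpha\neq0$) is of lower order and absorbed by the same kernel estimate. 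Collecting everything gives $\frac{d}{dt}\big(\|\phi u\|_{L^p}+\|\phi u_x\|_{L^p}+\|\phi\rho\|_{L^p}\big)\le C(1+M)\big(\|\phi u\|_{L^p}+\|\phi u_x\|_{L^p}+\|\phi\rho\|_{L^p}\big)+C$, and Gronwall's lemma closes the estimate.

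The hard part is precisely the nonlocal convolution terms. Under the failed condition (4.1) one cannot invoke the naive bound $\|(ve^{-|\cdot|})*f\|_{L^p}\le\|ve^{-|\cdot|}\|_{L^1}\|f\|_{L^p}$, since $ve^{-|\cdot|}\notin L^1$; the quadratic sources must instead be read as the $L^p$ kernel $ve^{-|\cdot|}$ convolved with the $L^1$ functions $(\phi^{1/2}u)^2,(\phi^{1/2}u_x)^2,(\phi^{1/2}\rho)^2$. This is exactly what forces the coupling to the $L^2$ estimate of Step 1 and justifies the presence of both data hypotheses. A secondary, routine technical matter is the truncation argument making the weighted differentiation rigorous for unbounded $\phi$ and the passage to the endpoints $p=1,\infty$.
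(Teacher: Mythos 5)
Your proposal is correct and is essentially the paper's own proof: the paper likewise first applies Theorem 4.1 with $p=2$ to the $v^{1/2}$-moderate weight $\phi^{1/2}$ (your H\"older verification that $v^{1/2}e^{-|\cdot|}\in L^1(\mathbb{R})$ follows from (4.2) is a detail the paper delegates to Brandolese's results), and then reruns the truncated weighted $L^p$ energy estimates with $f_N=\min\{\phi,N\}$, bounding the nonlocal terms exactly as you do --- moderateness plus Young's inequality $L^p\ast L^1\hookrightarrow L^p$ with the kernel $ve^{-|\cdot|}$ against the $L^1$ functions $(\phi^{1/2}u)^2$, $(\phi^{1/2}u_x)^2$, $(\phi^{1/2}\rho)^2$, and $\partial_x^2G=G-\delta$ for the $u_x$-equation --- before closing with Gronwall, letting $N\to\infty$, and passing to finite exponents $q\to\infty$ for the case $p=\infty$. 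Even your brisk handling of the linear term $\alpha\,\partial_xG\ast u$ matches the paper's own level of detail, since the paper disposes of the corresponding quantity $\|fu\|_{L^1}$ simply by invoking Theorem 4.1 with $p=1$.
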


 \begin{rem}
For the particular choice $c=d=0$ and $p=\infty$, we conclude from $|u_0(x)|+|u_{0,x}(x)|+|\rho_0(x)|\leq Ce^{-|x|}$ for any $x\in \mathbb{R}$ that the unique solution $z=(u,\rho)\in \mathcal{C}([0,T);H^s(\mathbb{R}))\times \mathcal{C}([0,T);H^{s-1}(\mathbb{R}))$ of (1.3) with $z_0=(u_0,\rho_0)$ satisfies
\begin{align}
|u(x,t)|+|\partial_x u(x,t)|+|\rho(x,t)|\leq C'e^{-|x|}
\end{align}
uniformly in $\mathbb{R}\times[0, T)$.
\end{rem}
First, we present some standard definitions.  In general a weight function is simply a non-negative function. A weight function $v:\mathbb{R}^n\rightarrow\mathbb{R}$ is called sub-multiplicative if
$$v(x+y)\leq v(x)v(y), \text{ for all } x,y \in \mathbb{R}^n.$$
Given a sub-multiplicative function $v$, a positive function $\phi$ is $v$-moderate if and only if
$$\exists C_0>0:\phi(x+y)\leq C_0v(x)\phi(y),\text{ for all } x,y \in \mathbb{R}^n.$$
If $\phi$ is $v$-moderate for some sub-multiplicative function $v$, then we say that $\phi$ is moderate. This is the usual terminology in time-frequency analysis papers \cite{Br}. Let us recall the most standard examples of such weights. Let
\begin{equation*}
\phi(x)=\phi_{a,b,c,d}(x)=e^{a|x|^b}(1+|x|)^c\log (e+|x|)^d.
\end{equation*}
We have (see \cite{Br}) the following conditions:

(i) For $a, c,d\geq0$ and $0\leq b\leq1$ such weight is sub-multiplicative.

(ii) If $a, c,d\in\mathbb{R}$ and $0\leq b\leq1$, then $\phi$ is moderate. More precisely, $\phi_{a,b,c,d}$ is $\phi_{\alpha,\beta,\gamma,\delta}$-moderate for $|a|\leq\alpha$,$|b|\leq\beta$, $|c|\leq\gamma$ and $|d|\leq\delta$.

The elementary properties of sub-multiplicative and moderate weights can be find in \cite{Br}. Now,   we prove Theorem 4.1.

\begin{proof}[Proof of Theorem \ref{result5}]
Let $z=(u,\rho)$ be the solution to Eq.(1.3) with initial data $z_0\in H^s\times H^{s-1}$, $s>3/2$, and $T$ be the maximal existence times of the solution $z$, which is guaranteed by Theorem 3.2 in \cite{GHY}.

From the solution  $z=(u,\rho)\in \mathcal{C}([0,T),H^s(\mathbb{R}))\times \mathcal{C}([0,T),H^{s-1}(\mathbb{R})),s>3/2$, The Sobolev's imbedding theorem yields
$$M\equiv \sup_{t\in[0,T]}\left(||u(t,\cdot)||_{L^\infty}+||  u_x(t,\cdot)||_{L^\infty} +||\rho(t,\cdot)||_{L^\infty}\right)<\infty.$$
For any $N\in\mathbb{Z}^+$, let us consider the $N$-truncations of $\phi(x)$:
$f(x)=f_N(x)=\min\{\phi(x),N\}$.
Then $f : \mathbb{R}\rightarrow\mathbb{R}$ is a locally absolutely continuous function such that
$$||f||_\infty\leq N,\quad |f'(x)|\leq A|f(x)|\quad \text{ a.e. on } \mathbb{R}.$$
In addition, if $C_1=\max\{C_0,\alpha^{-1}\}$, where $\alpha=\inf_{x\in\mathbb{R}}v(x)>0$, then
$$f(x+y)\leq C_1 v(x)f(y), \quad \forall x,y\in \mathbb{R}.$$
Moreover, as shown in \cite{Br},    the $N$-truncations $f$ of a
$v$-moderate weight $\phi$ are uniformly $v$-moderate with respect to $N$.

Recall that the operator $ \Lambda^{-2}$, acting on $L^2(\mathbb{R})$, can be expressed in its associated
Green's function $G(x)=\frac{1}{2}e^{-|x|}$. So, we can rewrite  Eq.(2.2) as the following nonlocal form
\begin{equation}
\left\{
\begin{array}{llll}
u_t+ uu_x=- \partial_xG*P(u,\rho),&t>0,x\in\mathbb{R},\\
\rho_t+u \rho_x=-(b-1)u_x\rho ,&t>0,x\in\mathbb{R},\\
u(x,0)=u_0(x),\rho(x,0)=\rho_0(x),&x\in\mathbb{R}.
\end{array}
\right.
\end{equation}
with $P(u,\rho)\doteq\frac{b}{2}u^2+\frac{3-b}{2}u_x^2+\frac{\kappa}{2}\rho^2-\alpha u$.

Let us start from the case $1\leq p <\infty$.
Multiplying the first equation in Eq. (4.4) by $|uf|^{p-1} sgn (uf)f$ and integrating it lead to 
\begin{equation}
\begin{split}
\int_\mathbb{R}|uf|^{p-1} sgn (uf)(\partial_tu f)dx=&\int_\mathbb{R}|uf|^{p-1} sgn (uf)fuu_x dx - \int_\mathbb{R} |uf|^{p-1} sgn (uf)f\cdot \partial_x(G*P(u,\rho))dx,
\end{split}
\end{equation}
The first term on the left hand of (4.5) reads
$$\int_\mathbb{R}|uf|^{p-1} sgn (uf)(\partial_tu f)dx=\frac{1}{p}\frac{d}{dt}||uf||_{L^{p}}^{p}=||uf||_{L^{p}}^{p-1}\frac{d}{dt}||uf||_{L^{p}}.$$
Then, the H\"{o}lder's inequality is followed by the estimate
\begin{align*}
\left|\int_\mathbb{R}|uf|^{p-1} sgn (uf)uu_xfdx\right|&\leq ||uf||_{L^p}^{p-1}|| u  u_xf||_{L^{p}}\leq   M ||uf||_{L^p}^{p-1}|| u_x  f||_{L^{p}}.
\end{align*}
For the nonlocal term,  we have
\begin{align*}
 &\left|\int_\mathbb{R} |uf|^{p-1} sgn (uf)[f\cdot\partial_x(G*P(u,\rho)) ]dx\right|\leq ||uf||_{L^{p}}^{p-1} \left\|f\cdot\partial_x(G*P(u,\rho)) \right\|_{L^{p}}\\
&\quad\leq C_{\alpha,b,\kappa}||uf||_{L^{p}}^{p-1}\left\{ ||(\partial_xG)v||_{L^{1}}\left\|f\cdot\left( u+ u^2 +  u_x^2+\rho^2\right)\right\|_{L^{p}}\right\}\\
&\quad\leq  C (1 +M ) ||uf||_{L^{p}}^{p-1} ( ||uf||_{L^p}+  || u_x f||_{L^p}+  || \rho f||_{L^p}),
\end{align*}
where the H\"{o}lder's inequality, Proposition 3.1 and 3.2 in \cite{Br}, and Condition (4.1) are applied in the first inequality, the second one, and the last one, respectively,
and the constant $C$ only depends on $v$ and $\phi$.
Form Eq. (4.5) one may get
\begin{equation}
\frac{d}{dt}||uf||_{L^{p}}\leq C    (1+ M)  (||uf||_{L^{p}} +  || u_x f||_{L^p}+  || \rho f||_{L^p}).
\end{equation}
Let us now give the estimate on $ u_xf $. Differentiating the first equation in Eq. (4.4) with respect to the variable $x$ and then multiplying by $f$, we may arrive at
\begin{equation*}
\begin{split}
\partial_t(u_x f)+u  u_{xx} f+ u_x^2f + f \partial_x^2(G* P(u, \rho) ) =0,
\end{split}
\end{equation*}
which yields 
$$\int_\mathbb{R}| u_xf|^{p-1}sgn(u_xf )\partial_t( u_x f)dx= ||u_xf||_{L^{p}}^{p-1}\frac{d}{dt}||u_xf||_{L^{p}},$$
$$\left| \int_\mathbb{R}|u_xf|^{p-1}sgn(u_xf)fu_x^2dx \right|\leq ||u_xf||_{L^p}^{p-1}|| u_x  f  u_x||_{L^{p}}\leq M||u_xf||_{L^p}^{p-1}|| u_x f||_{L^{p}},$$
and
\begin{align*}
&\left|\int_\mathbb{R} |u_xf|^{p-1}sgn(u_xf)[f  \partial_x^2(G* P(u,\rho) ) ]dx\right|\leq ||u_xf||_{L^{p}}^{p-1} ||f \partial_x^2(G* P(u, \rho) ) ||_{L^{p}}\\
&\quad\quad\leq C   (1+ M) ||u_xf||_{L^{p}}^{p-1}\left( ||uf||_{L^p}+ ||u_xf||_{L^p}+ ||\rho f||_{L^p}\right).
\end{align*}
For the second order derivative term, we have
\begin{equation*}
\begin{split}
&\left|\int_\mathbb{R}|u_xf|^{p-1}sgn(u_xf) u  u_{xx}fdx\right|\\
&\quad=\left|\int_\mathbb{R}|u_xf|^{p-1}sgn(u_xf) u\left[\partial_x( u_xf)- u_x  f_x )\right]dx\right|\\
&\quad=\left|\int_\mathbb{R} u \partial_x\left(\frac{|u_xf|^p}{p}\right)dx-\int_\mathbb{R}|u_xf|^{p-1}sgn(u_xf) u u_x f_x  dx\right|\\
&\quad\leq  M(1+A)||u_xf  ||_{L^p}^p,
\end{split}
\end{equation*}
where the inequality $| f_x(x)|\leq A f(x)$, for a.e. $x$, is applied.
Thus, it follows that
\begin{equation}
\frac{d}{dt}||u_xf||_{L^{p}}\leq C_3(1+M) (||uf||_{L^p}+ || u_x f||_{L^p}+ ||\rho f||_{L^p}).
\end{equation}
We now multiply the second equation in Eq.(4.4) with $|\rho f|^{p-1} sgn (\rho f)f$ and integrate to obtain the identity
\begin{equation*}
\frac{1}{p}\frac{d}{dt}||\rho f||_{L^{p}}^p+ \int_\mathbb{R}|\rho f|^{p-1} sgn (\rho f)f u \rho_x dx+(b-1)\int_\mathbb{R}|\rho f|^{p-1} sgn (\rho f)f u_x \rho dx=0.
\end{equation*}
As above, we get
\begin{equation*}
\begin{split}
&\left|\int_\mathbb{R}|\rho f|^{p-1} sgn (\rho f)f u_x \rho dx\right|\leq ||\rho f||_{L^p}^{p-1}|| fu_x   \rho||_{L^{p}}\leq M||\rho f||_{L^p}^{p-1}|| \rho f||_{L^{p}},
\end{split}
\end{equation*}
and
\begin{equation*}
\begin{split}
 \left|\int_\mathbb{R}|\rho f|^{p-1} sgn (\rho f)f u \rho_x dx \right|& =\left|\int_\mathbb{R}|\rho f|^{p-1} sgn (\rho f) u\left[\partial_x( \rho f)- \rho  f_x )\right]dx\right|\\
& =\left|\int_\mathbb{R} u \partial_x\left(\frac{|\rho f|^p}{p}\right)dx-\int_\mathbb{R}|\rho f|^{p-1}sgn(\rho f) u \rho f_x  dx\right|\\
& \leq  M(1+A)||\rho f  ||_{L^p}^p,
\end{split}
\end{equation*}
this yields
\begin{equation}
\frac{d}{dt}||\rho f||_{L^{p}}\leq C_4 M  ||\rho f||_{L^p} .
\end{equation}
Based on the inequalities (4.6)-(4.8), by Gronwall's inequality
$$|| u(t)f||_{L^{p}}+||u_x (t)f||_{L^{p}}+||\rho(t)f||_{L^{p}}\leq \left(|| u_0f||_{L^{p}}+|| u_{0,x}f||_{L^{p}}+||\rho_0f||_{L^{p}}\right)\exp\left(C(1+M) t\right), \text{ for all } t\in [0,T).$$
Since $f(x)=f_N(x)\uparrow \phi(x)$ as $N\rightarrow \infty$ for a.e. $x\in \mathbb{R}$ and $u_0\phi,  u_{0,x}\phi,\rho_0\phi\in L^p(\mathbb{R})$ the assertion
of the theorem follows for the case $p\in[1,\infty)$. Since $||\cdot||_{L^\infty}=\lim_{p\rightarrow\infty}||\cdot||_{L^p}$ it is clear that the theorem also applies for $p=\infty$.
\end{proof}

\begin{proof}[Proof of Corollary \ref{result6}]
As explained in \cite{Br}, if the function  $\phi$ is a $v$-moderate weight function, then the function $\phi^{1/2}$ is also a $v^{1/2}$-moderate weight satisfying $|(\phi^{1/2})'(x)|\leq \frac{A}{2}\phi^{1/2}(x)$, $\inf v^{1/2}>0$ and $v^{1/2}e^{-|\cdot|}\in L^1(\mathbb{R})$. We use Theorem 4.1 with $p=2$ to the weight $\phi^{1/2}$ and obtain
\begin{equation}
|| u(t)\phi^\frac{1}{2}||_{L^{2}}+||u_x (t)\phi^\frac{1}{2}||_{L^{2}}+||\rho(t)\phi^\frac{1}{2}||_{L^{2}}\leq \left(|| u_0\phi^\frac{1}{2}||_{L^{2}}+|| u_{0,x}\phi^\frac{1}{2}||_{L^{2}}+||\rho_0\phi^\frac{1}{2}||_{L^{2}}\right)\exp\left(C(1+M) t\right).
\end{equation}
In view of Proposition 3.2 in \cite{Br}, noticing $f(x)=f_N(x)=\min \{\phi(x),N\}$ admits
\begin{equation}
\begin{split}
||f\partial_x&(G*P(u, u_x))||_{L^p}||_{L^p}   \leq C_{\alpha,\kappa,b}||f\partial_x \left(G(x)* ( u+ u^2 +u_x ^2+\rho^2)\right)||_{L^p}\\
&\leq C||f \partial_x G(x) ||_{L^p}||f ( u+ u^2+u^3+ u^4+ u_x ^2)||_{L^1}\\
&\leq C||fe^{-|x|}   ||_{L^p}\left(||f   u||_{L^1}+||f ^\frac{1}{2}u ||_{L^2}^2 +||f ^\frac{1}{2} u_x ||_{L^2}^2+||f ^\frac{1}{2}\rho ||_{L^2}^2\right)\\
&\leq C_1\exp\left(C_2(1+M) t\right),
\end{split}
\end{equation}
where we used (4.9) and Theorem 4.1 with $p=1$.

Similarly, noticing $\partial_x^2 G=G-\delta$ reveals
\begin{equation}
\begin{split}
||f\partial_x^2(G*P(u, u_x))||_{L^p}
&\leq C_1\exp\left(C_2(1+M) t\right)+C_3(1+M) (||uf||_{L^p}+||fu_x||_{L^p}+||f\rho||_{L^p}),
\end{split}
\end{equation}
where the constants on the right-hand side of Eqs. (4.10) and (4.11) are independent of $N$.

By using the procedure as shown in the proof of Theorem 4.1, we can readily obtain
\begin{align}
\frac{d}{dt}||uf||_{L^p}\leq C(1+M) ||uf||_{L^p}+||f \partial_x(G*G(u, u_x))||_{L^p}, \text{ for }1\leq p<\infty,
\end{align}
and
\begin{align}
 \frac{d}{dt}||u_xf||_{L^{p}}\leq  C (1+M) ||u_x f||_{L^p}+ ||f\partial_x ^2(G*P(u, u_x))||_{L^p}, \text{ for } 1\leq p<\infty.
 \end{align}
Plugging Eqs. (4.10) and (4.11) into Eqs. (4.12) and (4.13), respectively, and summing up them,  we obtain
\begin{align*}
\frac{d}{dt}\left(|| u(t)f||_{L^{p}}+||u_x (t)f||_{L^{p}}+||\rho (t)f||_{L^{p}}\right)\leq& K_1(1+M) \left(|| u_0f||_{L^{p}}+|| u_{0,x}f||_{L^{p}}+||\rho_0f||_{L^{p}}\right)\\
&+C_1\exp\left(C_2(1+M) t\right),
\end{align*}
which is taken integration and limit $N\rightarrow \infty$ to get the conclusion in the case $1\leq p <\infty$.
The constants throughout the proof are independent of $p$.
Therefore, for $p=\infty$ one can rely on the result established for the finite exponents $q$ and then let $q\rightarrow \infty$. The rest of the theorem is fully similar to that of Theorem 4.1.
\end{proof}


\begin{thebibliography}{9}
{\footnotesize

\bibitem{A}
    \newblock V. I. Arnold,
    \newblock \emph{Sur la g\'{e}om\'{e}trie diff\'{e}rentielle des groupes de Lie de dimension infinie et ses applications \`{a}l' hydrodynamique des fluides parfaits},
    \newblock Ann. Inst. Fourier (Grenoble), 16 (1966), 319-361.
\bibitem{B}
    \newblock  H. Bahouri, J. Y. Chemin and R. Danchin,
    \newblock \emph{Fourier Analysis and Nonlinear Partial Differential Equations},
    \newblock Springer-Verlag, Berlin, Heidelberg, 2011.
\bibitem{Br}
    \newblock L. Brandolese,
    \newblock \emph{Breakdown for the Camassa-Holm equation using decay criteria and persistence in weighted spaces},
    \newblock Int. Math. Res. Not., 22 (2012), 5161-5181.
\bibitem{CH}
    \newblock R. Camassa and D. Holm,
    \newblock \emph{An integrable shallow water equation with peaked solitons },
    \newblock    Phys. Rev. Letters 71 (1993), 1661-1664.
\bibitem{CHH}
    \newblock R. Camassa, D. Holm and J. Hyman,
    \newblock \emph{A new integrable shallow water equation},
    \newblock Adv. Appl. Mech. 31 (1994), 1-33.

\bibitem{CHK}
    \newblock G. M. Coclite, H. Holden and K. H. Karlsen,
    \newblock \emph{Well-posedness of higher-order Camassa-Holm equations},
    \newblock  J. Differ. Eq. 246 (2009), 929-963.

\bibitem{C}
    \newblock A. Constantin,
    \newblock \emph{Existence of permanent and breaking waves for a shallow water equation: a geometric approach},
    \newblock   Ann. Inst. Fourier (Grenoble), 50 (2000), 321-362.
\bibitem{CE}
    \newblock A. Constantin and J. Escher,
    \newblock \emph{Wave breaking for nonlinear nonlocal shallow water equations},
    \newblock Acta Math., 181 (1998), 229-243.



\bibitem{CE2}
    \newblock  A. Constantin and J. Escher,
    \newblock \emph{Well-posedness, global existence and blow-up phenomena for a periodic quasi-linear hyperbolic equation},
    \newblock Comm. Pure Appl. Math., 51 (1998), 475-504.
\bibitem{CI}
    \newblock A. Constantin and R. Ivanov,
    \newblock \emph{On an integrable two-component Camassa-Holm shallow water system },
    \newblock   Phys.  Lett.  A,  372 (2008), 7129-7132.

\bibitem{CK}
    \newblock  A. Constantin and B. Kolev,
    \newblock \emph{Geodesic flow on the diffeomorphism group of the circle},
    \newblock Comment. Math. Helv. 78 (2003), 787-804.
\bibitem{D23}
    \newblock R. Danchin,
    \newblock \emph{A note on well-posedness for Camassa-Holm equation},
    \newblock  J. Differential Equations, 192 (2003), 429-444.


\bibitem{DP}
    \newblock A. Degasperis and M. Procesi,
    \newblock \emph{Asymptotic integrability, in: Symmetry and Perturbation Theory},
    \newblock   World Scientific, Singapore, 1999, pp. 23-37.
\bibitem{DGH}
    \newblock H. R. Dullin, G. A. Gottwald and D. D. Holm,
    \newblock \emph{Camassa-Holm, Korteweg-de Vries-5 and other asymptotically equivalent equations for shallow water waves},
    \newblock  Fluid Dyn. Res. 33 (2003), 73-79.
\bibitem{DGH1}
    \newblock H. R. Dullin, G. A. Gottwald and D. D. Holm,
    \newblock \emph{On asymptotically equivalent shallow water wave equations},
    \newblock Phys. D. 190 (2004), 1-14.
\bibitem{DGH2}
    \newblock H. R. Dullin, G. A. Gottwald and D. D. Holm,
    \newblock \emph{An integrable shallow water equation with linear and nonlinear dispersion},
    \newblock  Phys. Rev. Letters 87 (2001), 4501-4504.
\bibitem{EM}
    \newblock D. G. Ebin and J. E. Marsden,
    \newblock \emph{Groups of diffeomorphisms and the motion of an incompressible fluid},
    \newblock Ann. Math.  92 (1970), 102-163.

 \bibitem{EHKL}
    \newblock J. Escher,  D. Henry, B. Kolev and T. Lyons,
    \newblock \emph{Two-component equations modelling water waves with constant vorticity},
    \newblock  Ann. Mat. Pura Appl.  195 (2016),  249¨C271.

 \bibitem{EL}
    \newblock J. Escher and T. Lyons,
    \newblock \emph{Two-component higher order Camassa-Holm systems with fractional inertia operator: A geometric approach},
    \newblock   J. Geom. Mech. 7 (2015), 281-293.
 \bibitem{EY}
    \newblock J. Escher and Z. Yin,
    \newblock \emph{Well-posedness, blow-up phenomena, and global solutions for the  $b$-equation},
    \newblock J. reine Angew. Math. 624 (2008), 51-80.

 \bibitem{EK1}
    \newblock  J. Escher, and  B. Kolev,
    \newblock \emph{The Degasperis-Procesi equation as a non-metric Euler equation},
    \newblock Math. Z. 269 (2011), 1137-1153.

 \bibitem{EK2}
    \newblock J. Escher and B. Kolev,
    \newblock \emph{Right-invariant Sobolev metrics of fractional order on the diffeomorphism group of the circle},
    \newblock J. Geom. Mech. 6 (2014), 335-372.
 \bibitem{GHY}
    \newblock C. Guan, H. He and Z. Yin,
    \newblock \emph{Well-posedness, blow-up phenomena and persistence properties
for a two-component water wave system},
    \newblock Nonlinear Analysis: Real World Applications 25 (2015) 219¨C237.
\bibitem{HMPZ}
    \newblock A. A. Himonas, G. Misiolek, G. Ponce and Y. Zhou,
    \newblock \emph{Persistence properties and unique continuation of solutions of the Camassa-Holm equation},
    \newblock Comm. Math. Phy., 271 (2007), 511-522.

\bibitem{MZ}
    \newblock R. Mclachlan and X. Zhang,
    \newblock \emph{Well-posedness of modified Camassa-Holm equations},
    \newblock  J. Diff. Eq. 246(2009),  3241-3259.



\bibitem{M}
    \newblock G. Misio{\l}ek,
    \newblock \emph{Classical solutions of the periodic Camassa-Holm equation},
    \newblock Geom. Funct. Anal. 12 (2002), 1080-1104.
\bibitem{NZ}
    \newblock L. D. Ni and Y. Zhou,
    \newblock \emph{A new asymptotic behavior of solutions to the Camassa-holm equation},
    \newblock Proc. Amer. Math. Soc., 140 (2012), 607-614.




 \bibitem{P}
    \newblock Z. Popowicz,
    \newblock \emph{A two-component generalization of the Degasperis-Procesi equation},
    \newblock  J. Phys. A  39 (2006), 13717-13726.


\bibitem{Zhou}
    \newblock S. M. Zhou,
    \newblock \emph{Persistence properties for a generalized Camassa-Holm equation in weighted $L^p$ spaces},
    \newblock J. Math. Anal. Appl. 410 (2014), 932-938.
\bibitem{ZhouMu}
    \newblock S. Zhou and C. Mu,
    \newblock \emph{ The properties of solutions for a generalized b-family equation with higher-order nonlinearities and peakons},
    \newblock    J. Nonlinear Sci.  23 (2013), 863-889.

\bibitem{ZhouMuZeng}
  \newblock S. Zhou, C. Mu and R. Zeng
  \newblock \emph{Well-posedness and blow-up phenomena for a higher order shallow water equation},
  \newblock  J. Differential Equations 251 (2011), 3488-3499.

\bibitem{ZMW}
    \newblock S. M. Zhou, C. L. Mu and L. C. Wang,
    \newblock \emph{Well-posedness, blow-up phenomena and global existence for the generalized $b$-equation with higher-order nonlinearities and weak dissipation},
    \newblock Discrete Contin. Dyn. Syst. Ser. A  32 (2014), 843-867.
}
\end{thebibliography}
\end{document}